\def\timenow{\@tempcnta\time
\@tempcntb\@tempcnta
\divide\@tempcntb60
\ifnum10>\@tempcntb0\fi\number\@tempcntb
:\multiply\@tempcntb60
\advance\@tempcnta-\@tempcntb
\ifnum10>\@tempcnta0\fi\number\@tempcnta}
\numberwithin{equation}{section}
\newcommand{\beqa}{\begin{eqnarray}}
\newcommand{\eeqa}{\end{eqnarray}}
\newcommand\bal{\begin{aligned}}
\newcommand\eal{\end{aligned}}
\newcommand{\beql}[1]{\begin{equation}\label{#1}\bal}
\newcommand{\eeql}{\eal\end{equation}}
\newcommand{\cA}{\mathcal{A}}
\newcommand \esssup{\mbox{\upshape ess}\sup}
\newcommand{\cN}{\mathcal{N}}
\newcommand \cP{{\cal P}}
\newcommand{\calL}{\mathcal{L}}
\newcommand{\N}{\mathbb{N}}
\newcommand{\R}{\mathbb{R}} 
\newcommand{\un}{\ind}
\newcommand\ind{\mathds{1}}
\newcommand \PE{\mathbb{E}}
\newcommand \Esp[1]{\mathbb{E}\left[#1\right]}
\newcommand \Espcon[2]{\mathbb{E}^{#1}\left[#2\right]}
\newcommand \PP{\mathbb{P}}
\newcommand \PPro[1]{\PP\left(#1\right)}
\newcommand \PProcon[2]{\PP^{\,#1}\left[#2\right]}
\newcommand \es{\mathrm{es}}\renewcommand \es{s}
\newcommand \Radcemp{\mathcal{R}_{emp}}
\newcommand \Radcave{\mathcal{R}_{ave}}
\newcommand\alo[1]{{\mathbf \xi(#1)}}
\newcommand\stax{S}
\def\proof{\noindent {\it Proof. $\, $}}\def\proof{\noindent{\it\textbf{Proof}. $\, $}}
\def\finproof{\hfill\rule{4pt}{6pt}}
\newtheorem{thm}{Theorem}[section]
\newtheorem{lem}{Lemma}[section]
\newtheorem{pro}{Proposition}[section]
\newtheorem{cor}{Corollary}[section]
\newtheorem{conj}{Conjecture}[section]
\newtheorem{rem}{Remark}[section]
\newtheorem{com}{Comments}[section]
\newtheorem{nota}{Notation}[section]
\newtheorem{dfn}{Definition}[section]
\newtheorem{hyp}{Assumption}[section]
\theoremstyle{definition}
\newtheorem{exa}{Example}[section]
\newcommand{\bt}{\begin{theorem}}\newcommand{\et}{\end{theorem}}
\newcommand{\bl}{\begin{lem}}\newcommand{\el}{\end{lem}}
\newcommand{\bp}{\begin{pro}}\newcommand{\ep}{\end{pro}}
\newcommand{\bcor}{\begin{cor}}\newcommand{\ecor}{\end{cor}}
\newcommand{\bconj}{\begin{conj}}\newcommand{\econj}{\end{conj}}
\newcommand{\bd}{\begin{defi} \rm 
}
\newcommand{\ed}{\finproof\end{defi} }
\newcommand{\eds}{\end{defi} }
\newcommand{\brem }{\begin{rem} \rm }\newcommand{\erem }{\finproof\end{rem}}
\newcommand{\bcom}{\begin{com} \rm }\newcommand{\ecom }{\end{com}}
\newcommand{\brems }{\begin{rem} \rm }\newcommand{\erems }{\end{rem}}
\newcommand{\bex}{\begin{ex} \rm }\newcommand{\eex}{\finproof\end{ex}}
\newcommand{\bno}{\begin{nota} \rm }\newcommand{\eno}{\finproof\end{nota}}\newcommand{\enos}{\end{nota}}
\newcommand{\eexs}{\end{ex} }
\newcommand{\bhyp}{\begin{hyp} 
}\newcommand{\ehyp}{\end{hyp}}
\def\eee{\end{document}}
\renewcommand \PE{{\rm E}}
\def\qqq{\quad\quad\quad}
\def\bb{\begin{block}}
\def\eb{\end{block}} 
\renewcommand \PE{{\rm E}}
\renewcommand \Esp[1]{\PE\left[#1\right]}
\renewcommand \Espcon[2]{\PE^{#1}\left[#2\right]}
\renewcommand \PP{{\rm P}}
\renewcommand \PProcon[2]{\PP^{\,#1}\left[#2\right]}
\newcommand{\cF}{\mathcal{F}}
\newcommand{\cG}{\mathcal{G}}
\newcommand{\cH}{\mathcal{H}}
\newcommand{\eqdef}{\mathrel{\mathop:}=}\def\eqd{\eqdef}
\newcommand{\defeq}{\mathrel{{=}{\mathop:}}}
\def\finproof{\rule{4pt}{6pt}}\def\fe{~\finproof}
\title{{\bf Confidence intervals for nonparametric regression}\\} \author{\large David Barrera\thanks{Email: {\tt j.barrerac@uniandes.edu.co}. Departamento de Matem\'{a}ticas, Universidad de los Andes, Cra 1 \# 18a-12, Edificio H. Bogot\'{a}, Colombia. Postal code: 111711.} \thanks{The author is thankful for the comments and suggestions following from the reading by the co-authors in \cite{barcregobngusaa}, which helped to improve the presentation and to clean up several errors.
}  
} \date{ }
\begin{document} \maketitle
\begin{abstract}
We demonstrate and discuss nonasymptotic bounds in probability for the cost of  a regression scheme with a general loss function from the perspective of the Rademacher theory,  and for the optimality with respect to the average $L^{2}$-distance to the underlying conditional expectations of  least squares regression outcomes from the perspective of the Vapnik-Chervonenkis theory. 

The results follow from an analysis involving  independent but possibly nonstationary training samples and can be extended, in a manner that we explain and illustrate, to relevant cases in which the training sample exhibits dependence.

\vspace{0.2cm}
\noindent {\bf Keywords:} Nonparametric regression, distribution-free estimates, Rademacher complexity, confidence intervals, large deviations, dependent samples.
\end{abstract}
{
  \hypersetup{linkcolor=black}
  \tableofcontents
}
\section{Introduction}

This paper is a companion to \cite{barcregobngusaa}, which proposes new methods and error bounds in the nonparametric, distribution-free setting for the approximation of conditional quantiles and expected shortfalls. By the latter, we mean functions 
\begin{align}
\label{equdefconquaconexpsho}
x\mapsto q_{\alpha}(x),&\qquad x\mapsto \es_{\alpha}(x) 
\end{align}
defined on a Polish space $S$ where, for a pair of random variables $(X,Y)\in S\times \R$, and under appropriate regularity hypotheses on the conditional distribution of $Y$ given $X=x$\footnote{For instance its absolute continuity and strictly positive density for $\PP_{X}-$a.e. $x\in S$.}, $q_{\alpha}(\cdot)$ and $\es_{\alpha}(\cdot)$ are characterized by the properties
\begin{align}
\label{equchaconquaconexpsho}
\PProcon{x}{Y\leq q_{\alpha}(x)}=\alpha,&\qquad (1-\alpha)\es_{\alpha}(x)=\Espcon{x}{Y\un_{\{Y\geq q_{\alpha}(x)\}}}&\qquad\mbox{ for $\PP_{X}-$a.e. $x$},
\end{align}
where $\PProcon{x}{\,\cdot\,}$ and $\Espcon{x}{\,\cdot\,}$ denote  the conditional probability of $Y$ given $X=x$ and the expectation with respect to $\PProcon{x}{\,\cdot\,}$.

The approximation of the functions \eqref{equdefconquaconexpsho} is an important problem in statistical inference in general \cite{koenker2017quantile} and in particular nowadays within the context of machine learning methods \cite{barcregobngusaa}.

The present paper arose from the necessity of producing bounds in probability applicable to the convergence analysis of the methods in \cite{barcregobngusaa}, concretely to the approximation $\hat{q}_{\alpha}(\cdot)$ of $q_{\alpha}(\cdot)$ through a scheme based on the ``pinball'' or ``tilted'' loss\footnote{See \cite[eqn. (\ref*{vares-equappvar})]{barcregobngusaa}.} and to the least squares scheme proposed to produce and approximation $\hat{r}_{\hat{q}_{\alpha}}(\cdot)$ of the function $r_{\alpha}(\cdot):=\es_{\alpha}(\cdot)-q_{\alpha}(\cdot)$ given any approximation $\hat{q}_{\alpha}(\cdot)$ of $q_{\alpha}(\cdot)$\footnote{See \cite[eqn. (\ref*{vares-equappexpsho})]{barcregobngusaa}.}.

More precisely, it turns out that the analysis (and the construction) of the regression scheme for the approximation of 
$\es_{\alpha}(\cdot)$ proposed in \cite{barcregobngusaa} is possible via a rather natural continuation of the arguments presented in the papers \cite{bargob19} and \cite{bargob19seq} in which nonparametric, distribution-free error bounds associated to learning  schemes are discussed in nonstationary settings mostly in the context of the ``Vapnik-Chervonenkis'', or ``VC'' theory (\cite{gyor:kohl:krzy:walk:02, vap}), complemented by a  discussion on  learning bounds via the Rademacher theory (see \cite{wol,mohri2018foundations}), applicable to the pinball loss associated to the scheme in \cite{barcregobngusaa} used to approximate $q_{\alpha}(\cdot)$. 
The results following from this task, which go beyond the application in \cite{barcregobngusaa}, and are therefore of general interest, constitute  the subject of this paper.

From this general perspective, an alternative motivation for the present paper arises from the increasing demand for rigorous expositions on  nonasymptotic bounds associated to regression schemes in which the learning sample may be nonstationary.\footnote{The webpage \cite{sha} contains a list of sources dedicated to several instances of this scenario.} The continuity with \cite{bargob19} and \cite{bargob19seq} is rather evident in this context: \cite{bargob19} is basically an extension of the analysis in \cite[Chapter 11, 12]{gyor:kohl:krzy:walk:02} aimed to provide concentration inequalities
associated to least squares which have ``the right rate'' (and constants tighter than those in \cite{gyor:kohl:krzy:walk:02}) that are
valid also in the independent, nonstationary case; \cite{bargob19seq} continues these developments by applying coupling ideas that permit, in particular, a set of weak error bounds for least squares regression schemes with $\beta$-mixing learning samples\footnote{See e.g. \cite[Theorem 20 and Propositions 28, 29]{bargob19seq}.}. The present paper continues this analysis towards the corresponding bounds in probability, complemented by an application of the coupling idea in  \cite{bargob19seq} to the bounds obtained for general loss functions from the Rademacher theory.

We remark that, while we do not think that this paper contains any essential contribution to the bounds developed via the Rademaccher theory (beyond their extension to dependent cases),  we believe that the inclusion of the corresponding survey-like Section \ref{p:tools} below is justified by at least two reasons: first, the author has not been able to find a presentation of these results which is concise enough to serve as a single source  for the analysis performed  in \cite{barcregobngusaa}: the presentation below provides such a discussion departing from first principles ``modulo folklore''; second, in spite of not being an essentially difficult task, there seems to be a lack of references presenting the Rademacher theory (with independence) in a nonstationary context. We hope therefore that this paper serves to motivate a broadening of perspective in this direction, which is further justified by the emergence in practice of learning scenarios in which the training sample may not be i.i.d.. 

\subsubsection*{Organization of the paper}
The paper is organized as follows: Section \ref{secprecon} presents the notation and conventions to be used, and a general remark on the extension of concentration inequalities from independent to dependent samples. Section \ref{p:tools} is a relatively self-contained presentation of some concentration inequalities via the Rademacher theory and of their application to upper bounds on the probability of large deviations associated to the empirical process, illustrated at the end with examples. Finally, Section \ref{secgenvc} presents large deviation estimates via the VC theory akin to those in Section \ref{p:tools} but specialized to the deviation of the quadratic loss, and interprets them in terms of the optimality of the average $L^{2}-$distance between the function obtained by empirical minimization and the conditional expectations of the responses given the covariates.

\section{Preliminary conventions and observations}
\label{secprecon}

We begin in Section \ref{s:app} by explaining the notation used in the paper: the conventions introduced in this regard are important for a concise presentation of the proofs. Then we present in Section \ref{secremdep} some coupling results allowing to extend our bounds, which are demonstrated under the assumption of independent training samples, to the  dependent case (these ideas are applied later, in sections \ref{p:tools} and \ref{secgenvc}, for remarks \ref{rembetmixrad} and \ref{remdepbouvc}).
 \subsection{Notation and setup\label{s:app}}

The following notation and conventions will be used in what follows:
 
 \medskip

{\itshape\bfseries State spaces, sequential functions.} Given a Polish space $S$, $\calL_{S}$ denotes the set of Borel measurable functions $S\to \R$. If  $(S_{k})_{k=1}^{n}$ is a sequence of Polish spaces, any subset
 \begin{align}
 \cH_{1:n}\subset \prod_{k=1}^{n}\calL_{S_{k}}\defeq \calL_{S_{1:n}}^{\otimes}
 \end{align} 
 will be called a {\it  family of sequential  functions on
 \begin{align}
 \prod_{k=1}^{n}S_{k}\defeq S_{1:n}^{\otimes}.
 \end{align}
 }  
 
 \brem
 \label{remidedia}
 If $S_{1:n}^{\otimes}=S^{n}$ and $\cH\subset \calL_{S}$, we will use often the identification $\cH\equiv diag(\cH)_{1:n}$ where 
\begin{align}
\label{equdefdiafam}
diag(\cH)_{1:n}\eqdef \{(h,\dots,h):h\in \cH\}\subset (\calL_{S})^{n},
\end{align}
in order to ``treat'' $\cH$ as a subset of $(\calL_{S})^{n}$.
\erems

\noindent{\itshape\bfseries  Random elements. }A {random element} is a Borel measurable function $Z:\Omega\to S$ where $(\Omega,\cA,\PP)$ is a probability space and $S$ is some Polish space (which will be clear from the context). We will use the usual notation
\begin{align}
\label{equdefpreima}
Z\in B:=Z^{-1}(B):=\{\omega\in \Omega:Z(\omega)\in B\}\in \cA
\end{align}
where $B$ is a Borel element of $S$. If $S=\R$ we will refer to $Z$ as a {\it random variable}, and we will denote by 
\begin{align}
||Z||_{\PP,\infty}:=\inf\{z\in \mathbb{R}:\PPro{|Z|>z}=0\}
\end{align}
the {\it $L^{\infty}_{\PP}$ norm of $Z$}, with the convention $\inf\emptyset=\infty$, and by
\begin{align}
||Z||_{\PP,p}=\left(\int_{\Omega}|Z(\omega)|^{p}\PP(d\omega)\right)^{1/p}
\end{align}
the {\it  $L^{p}_{\PP}$ norm of $Z$} ($p\in [0,\infty)$), where the integral takes the value $\infty$ if $|Z|^{p}$ is not $\PP-$integrable.

All the random elements below will be assumed to be defined {\it in the same fixed probability space}, whose existence can be verified {\it a posteriori} by standard measure-theoretical methods. This is easy since all of our arguments will involve only finitely many random elements. 

 \brem
Let $\cH_{1:n}\subset \calL_{S_{1:n}}^{\otimes}$. If $H_{1:n}:S_{1:n}^{\otimes}\to [0,\infty]^{n}$ is the sequential function defined by 
\begin{align}
\label{equdefseqfuncaph}
H_{k}(z_{k})\eqd \sup_{h_{1:n}\in \cH_{1:n}}|h_{k}(z_{k})|,&\qquad 1\leq k\leq n, 
\end{align}
 if $(h_{1:n}^{(l)})$ is the family in \eqref{fampoimes}, and if $Z_{1:n}$ is a random element of $S_{1:n}^{\otimes}$, then the inequalities
\begin{align}
\label{equinesupasone}
H_{k}(Z_{k})
=\sup_{l\in \N}|h_{k}^{(l)}(Z_{k})|\leq \sup_{h_{1:n}\in \cH_{1:n}}||h_{k}(Z_{k})||_{\PP,\infty}, &\qquad \PP-a.s.,
\end{align}  
valid for $1\leq k\leq n$,  imply that
\begin{align}
\label{equexcsupnor}
||H_{k}(Z_{k})||_{\PP,\infty}= \sup_{h_{1:n}\in \cH_{1:n}}||h_{k}(Z_{k})||_{\PP,\infty}.
\end{align}
(``$\leq$'' follows from \eqref{equinesupasone}, and ``$\geq$'' is true under no condition). 

If the components of $Z_{1:n}$ are even more  independent, then\footnote{If $X_{1:n}$ is independent then $||\,|X_{1:n}|_{n,p}\,||_{\PP,\infty}=|\,||X_{1:n}||_{\PP,\infty}\,|_{n,p}$: ``$\leq$'' is elementary, and ``$\geq$'' follows from an approximation argument: for every $\epsilon>0$, the event $\{|\,||X_{1:n}||_{\PP,\infty}\,|_{n,p}< |X_{1:n}|_{n,p}+\epsilon \}$ has (by independence) positive probability. }
\begin{align}
\label{equequl2norlinfnorlinfnorl2nor}
||\,|H_{1:n}(Z_{1:n})|_{{n,p}}\, ||_{\PP,\infty}= |\,||H_{1:n}(Z_{1:n})||_{\PP,\infty}\,|_{{n,p}}.
\end{align}
\erems

 \medskip
 
\noindent{\itshape\bfseries  Operations with sequential functions.} If $z_{1:n}$ is an element of $S_{1:n}^{\otimes}$, and if $h_{1:n}$ is a sequential function on $S_{1:n}^{\otimes}$, we will denote by $ h_{1:n}(z_{1:n})$ the vector
 \begin{align}
 \label{equopecomwis}
 h_{1:n}(z_{1:n})\eqd (h_{k}(z_{k}))_{k=1}^{n}\in \R^n,
 \end{align}
 and we will operate with sequential functions in a component-wise manner, with posible multiplication by scalars. Thus if $(a,g_{1:n},h_{1:n})\in \R\times\calL_{S_{1:n}}^{\otimes}\times \calL_{S_{1:n}}^{\otimes}$ 
\begin{align}
(a g_{1:n}+h_{1:n})(z_{1:n})=a g_{1:n}(z_{1:n})+a h_{1:n}(z_{1:n})=(ag_{k}(z_{k})+h_{k}(z_{k}))_{k=1}^{n}\\
( g_{1:n}h_{1:n})(z_{1:n})= g_{1:n}(z_{1:n})h_{1:n}(z_{1:n})=(g_{k}(z_{k})h_{k}(z_{k}))_{k=1}^{n}
\end{align} 
for every $z_{1:n}\in S_{1:n}^{\otimes}$. We will also make use of the scalar product
\begin{align}
a_{1:n}\cdot b_{1:n}=\sum_{k=1}^{n}a_{k}b_{k}
\end{align}
where the state space of $a_{1:n}$ or $b_{1:n}$ will be clear from context. 

For any two $\cG_{1:n}\cup \cH_{1:n}\subset \calL_{S_{1:n}}^{\otimes}$, we will use the notation
 \begin{align}
 \label{equdefdirsum}
 \cG_{1:n}+\cH_{1:n}\eqdef \{g_{1:n}+h_{1:n}:h_{1:n}^{(k)}\in \cH_{1:n}^{(k)}\}
 \end{align}
 for the direct sum of  $\cG_{1:n}$ and $\cH_{1:n}$. A similar interpretation defines $a\cH_{1:n}$ (where $a$ is a scalar), $\cG_{1:n}\cH_{1:n}$, and $\cG_{1:n}\cdot\cH_{1:n}$.

 \medskip
 
\noindent{\itshape\bfseries  Operations via a componentwise defined functional.} We will define
\begin{align}
F(a_{1:n})\eqdef (F(a_{1}),\dots, F(a_{n}))
\end{align}
whenever ``$F$'' is an operator well defined on each $a_{k}$ (this is in harmony with the identification $F=(F,\dots,F)$ in Remark \ref{remidedia} and with \eqref{equopecomwis}). Thus (for instance) for a random element $(Y_{1:n},Z_{1:n})$ of $\R^{n}\times S_{1:n}^{\otimes}$ and $h_{1:n}\in \calL_{S_{1:n}}^{\otimes}$,
\begin{align}
||Y_{1:n}||_{\PP,\infty}=(||Y_{1}||_{\PP,\infty}, \dots, ||Y_{n}||_{\PP,\infty}),\\
\Esp{h_{1:n}(Z_{1:n})}\eqdef (\Esp{h_{1}(Z_{1})},\dots,\Esp{h_{n}(Z_{n})})
\end{align}
whenever the expectations are well defined. In order to avoid confusions, 
we will always introduce the dimension ``$n$'' for functionals  defined on sequences $a_{1:n}$ rather than on each one of its elements. See for instance \eqref{equdeflpnor}.

 \medskip

\noindent{\itshape\bfseries  $\ell^{p}$ norms.} We will use the notation
\begin{align}
\label{equdeflpnor}
|a_{1:m}|_{m,p}=\left(\sum_{k=1}^{m}|a_{k}|^{p}\right)^{1/p}
\end{align} 
($p\geq 1$) for the $\ell^{p}$ norm of $a_{1:m}\in [-\infty,\infty]^{m}$,  with the obvious convention when $a_{k}=\pm\infty$ for some $k$. 


\medskip

\noindent{\itshape\bfseries Convex hulls.} In what follows,
\begin{align}
a_{1:n}^{1:m}=(a_{1:n}^{1},\dots,a_{1:n}^{m})
\end{align}
denotes an $m-$tuple whose elements are $n-$tuples (this can be thought of as a $m\times n$ matrix when convenient), and for consistency we will use the notation $t^{1:m}$ for any $m-$tuple that operates against $a_{1:n}^{1:m}$.

 Given $\cH_{1:n}\subset\calL_{S_{1:n}}^{\otimes}$ we will  denote by 
\begin{align}
\label{equdefconhul}
co(\cH_{1:n})\eqdef \bigcup_{m\in \N}\left\{t^{1:m}\cdot h_{1:n}^{1:m} \,|\, (h_{1:n}^{1:m},t^{1:m})\in \cH_{1:n}^{m}\times \in [0,1]^{m}, |t^{1:m}|_{m,1}=1 \right\}
\end{align}
the {\it convex hull of  $\cH_{1:n}$}, and by 
\begin{align}
\label{equdefconpri}
cobal(\cH_{1:n})\eqdef \bigcup_{m\in \N}\left\{t^{1:m}\cdot h_{1:n}^{1:m} \,|\, h_{1:n}^{1:m}\in \cH_{1:n}^{m},|t^{1:m}|_{m,1}\leq1 \right\}
\end{align}
the {\it convex balanced hull of $\cH_{1:n}$}. 

\brem
\label{remchacp}
It is easy to see from \eqref{equdefconpri} that 
$cobal(\cH_{1:n})$ is equal to $co(\{0\}\cup\cH_{1:n}\cup(-\cH_{1:n}))$, where ``$0$'' denotes the sequential function $f_{1:n}(z_{1:n})=(0,\dots,0)$.
  Since  $0\in co(\cH_{1:n}\cup(-\cH_{1:n}))$ we deduce that, in fact
\begin{align}
\label{equcpvscoequ}
cobal( \cH_{1:n})=co(\cH_{1:n}\cup(-\cH_{1:n})).
\end{align} 
In particular, $cobal( \cH_{1:n})=co(\cH_{1:n})$ if $\cH_{1:n}=-\cH_{1:n}$.
\erems

\noindent{\itshape\bfseries Pointwise measurability. } We will assume that all the sequential families $\cH_{1:n}$ considered here are  {\it pointwise measurable}, meaning that there exists a family 
\begin{align}
\label{fampoimes}
(h_{1:n}^{(l)})_{l}\subset \cH_{1:n}
\end{align}
 such that, for every given $h_{1:n}\in \cH_{1:n}$, there exists a sequence $(l_{m})_{m}$ with 
\begin{align}
h_{1:n}(z_{1:n})=\lim_{m}h_{1:n}^{(l_{m})}(z_{1:n})
\end{align}
for all $z_{1:n}$. This implies in particular that, for all $z_{1:n}\in S_{1:n}^{\otimes}$ and every continuous $\varphi_{n}: \R^{n}\to \R$,
\begin{align}
\sup_{h_{1:n}\in \cH_{1:n}}\varphi_{n}(h_{1:n}(z_{1:n}))=\sup_{l\in \N}\varphi_{n}(h_{1:n}^{(l)}(z_{1:n})).
\end{align}
for all $z_{1:n}\in S_{1:n}^{\otimes}$. 


\medskip

\noindent{\itshape\bfseries Sequences indexed by arbitrary sets.} Let us finally establish that all the definitions above can and will be extended to cases where, instead of $1,\dots,n$, the sequences are indexed by other sets, so if (for instance) $J\subset \N$ is any given set, $z_{J}$ denotes an element of the form $(z_{j})_{j\in J}$. We will also be careful to preserve a consistent notation, so if ${J}_{1}$ and $J_{2}$ are given, we will use the notation $z_{J_{1}}$ and $z_{J_{2}}$ only if these touples coincide on the indexes in $J_{1}\cap J_{2}$ (and therefore the restriction $z_{J_{1}\cap J_{2}}$ is unambiguously defined).

\subsection{A general remark on dependent samples}
\label{secremdep}

We will extend the results in sections \ref{p:tools} and \ref{secgenvc} that are obtained under the hypothesis of independent sampling  to the dependent case in a manner that is useful  when certain $\beta-$mixing coefficients decay rapidly enough. The beta-mixing coefficients are defined as follows:
\begin{dfn}[$\beta-$mixing coefficients]
\label{defbetmixcoe}
Let $\cA_{1}$ and $\cA_{2}$ be two sub-sigma algebras  of $\cA$. The $\beta-$mixing coefficient $\beta(\cA_{1},\cA_{2})$ between $\cA_{1}$ and $\cA_{2}$ is defined as
\begin{align}
\label{betmixdef}
\beta(\cA_{1},\cA_{2}):=&\Esp{\esssup_{A_{1}\in\cA_{1}}|\PP({A}_{1})-\PPro{A_{1}|\cA_{2}}|}
\end{align}
where $\esssup_{A_{1}\in \cA_{1}}$ denotes the essential supremum indexed by the element of $\cA_{1}$. If $Z_{1}, Z_{2}$ are random elements,
\begin{align}
\label{equdefbetranele}
\beta(Z_{1},Z_{2}):=\beta(\sigma(Z_{1}),\sigma(Z_{2})).
\end{align}
\end{dfn}

{See \cite[Proposition VI-1-1]{nevdispar} for a definition of the essential supremum,} from where it follows in particular that there exist a countable family $\{A_{1,n}\}_{n}\subset \cA_{1}$ such that,
\begin{align}
\beta(\cA_{1},\cA_{2})=\Esp{\sup_{n}|\PPro{A_{1,n}}-\PPro{A_{1,n}|\cA_{2}}|}
\end{align} 
\brem
\label{remprobetmix} 
If $\cA_{2}$ is countably generated, then
\begin{align}
\beta(\cA_{1},\cA_{2})&=\frac{1}{2}\sup_{(P_{{1}},P_{{2}})\in \cP_{\cA_{1}}\times \cP_{\cA_{2}}}\sum_{(A_{1},A_{2})\in{P_{{1}}}\times {P_{{2}}}}|\PPro{A_{1}\cap A_{2}}-\PP(A_{1})\PP(A_{2})|,
\end{align} 
where $\cP_{\cA_{k}}$ ($k=1,2$) denotes the family of finite partitions of $\Omega$ by $\cA_{k}-$sets.\footnote{This can be seen for instance by noticing that there exist increasing families of finite fields $\{\cA_{j,k}\}_{k}$ ($j=1,2$) with $\cup_{k}\cA_{j,k}\subset \cA_{j}$  such that 
\begin{align}
\beta(\cA_{1},\cA_{2})=\lim_{k}\lim_{l}\beta(\cA_{1,l},\cA_{2,k}),
\end{align}
and using elementary considerations on $\beta(\cA_{1},\cA_{2})$ when $\cA_{j}$ are finite fields. For a proof under slightly more restrictive hypotheses, see \cite[Proposition F.2.8]{douprimousou}.} . This representation holds in particular if $\cA_{k}:=\sigma(Z_{k})$  ($k=1,2$) as in \eqref{equdefbetranele}.
\erems
With this notion, we define the {\it past-to-present $\beta-$mixing coefficient of $m-$dependence} as follows:
 \begin{dfn}
 Let $Z_{J}$ be a random element of $S_{J}^{\otimes}$ ($J\subset \mathbb{Z}$). For every $m\in \N$, the $\beta-$coefficient of $m-$dependence of $Z_{J}$ is defined by
 \begin{align}
\beta_{Z_{J}}(m):=\sup_{k}\beta(Z_{J\cap(-\infty,k-m]},Z_{J\cap\{k\}}).
\end{align} 
 \end{dfn}

 \bl
 
Let $m\leq  n$ and define, for every $0\leq k<m $
\begin{align}
J_{m,k}=\{k+lm:l\in \mathbb{Z}\}\cap\{1,\dots,n\} 
\end{align}
If $Z_{1:n}$ is a random element of $S_{1:n}^{\otimes}$, then for  any $t\in \R$, any $a_{1:n}\in \R^{n}$, and any  $\cH_{1:n}\subset \calL_{S_{1:n}}^{\otimes}$, the inequality
\begin{align}
&\PPro{\sup_{h_{1:n}\in \cH_{1:n}}\{a_{1:n}\cdot h_{1:n}(Z_{1:n})\}> mt}\notag\\\leq&\sum_{k=0}^{m-1}\PPro{\sup_{h_{J_{m,k}}\in \cH_{J_{m,k}}}a_{J_{m,k}}\cdot h_{J_{m,k}}(Z_{J_{m,k}}^{*})
>t}+n\beta_{Z_{1:n}}(m)
\label{equdevbetmix}
\end{align}
holds, where $Z_{1:n}^{*}$ is an independent sequence with the same marginals as $Z_{1:n}$.
\el
\begin{proof}
This follows from Berbee's lemma \cite[Theorem 16.12]{braintstrmixvol2}. See \cite[Theorem 2.11 and Proposition 2.14]{bargob19seq} for a detailed proof.
\end{proof}

\medskip

The inequality \eqref{equdevbetmix} permits to extend conveniently the estimates on independent samples that will appear below to cases in which the corresponding sampling sequences satisfy $\beta_{Z_{1:n}}(m_{n})\to_{n} 0$ with an appropriate rate of decay for appropriate $m_{n}\leq n$. To illustrate with a classical case, assume that $Z_{1:n}$ is the finite-dimensional projection of a sequence $Z_{1:\infty}$ such that the {\it exponential mixing rate}
\begin{align}
\label{equexpmixrat}
\beta_{Z_{1:n}}(m)\leq r^{-m}, &\qquad \mbox{for some $r>1$ independent of $n$}
\end{align}
is verified\footnote{This is for instance implied by the condition of ``geometric ergodicity'' if the sample sequence $Z_{1:\infty}$ is a (strictly) stationary  Markov chain, see \cite[Theorems 21.13 and 21.19]{braintstrmixvol2} and references therein. This is also the case for causal ARMA processes with absolutely continuous i.i.d. innovations, as shown in \cite{mok}.}. Then given $\delta\in (nr^{-n},1)$ the choice $m=\lceil \log_{r}(n/\delta)\rceil$ gives 
\begin{align}
\beta_{Z_{1:n}}(m)\leq {\delta}/{n},
\end{align}
which combined with \eqref{equexpmixrat} gives a deviation inequality  that typically differs from the one for the independent case ``only'' by (essentially) a logarithmic factor. 

To be more concrete, if we know a bound of the type
 \begin{align}
 \label{equestind}
 \PPro{\sup_{h_{J}\in \cH_{J}}a_{J}\cdot h_{J}(Z_{J}^{*})
 >t}\leq F(t,|J|) 
 \end{align}
 giving uniform deviation bounds for sub-samples of size $|J|$ of an independent sequence $Z_{1:\infty}^{*}$ with the same marginals of $Z_{1:\infty}$, then under \eqref{equexpmixrat} the left-hand side of \eqref{equdevbetmix} is  upper bounded (ignoring divisibility issues) by 
 \begin{align}
 \label{equuppboufandlog}
 \log_{r}(n/\delta)F(t/\log_{r}(n/\delta), n/(\log_{r}(n/\delta)))+\delta.
 \end{align}

 For further illustration, we will apply this idea in  Remarks \ref{rembetmixrad} and \ref{remdepbouvc} below,  {assuming that} $n$ is divisible by $ \log_{r} (n/\delta)$. The reader is invited to write down the estimate for general $\delta$ and to perform analogous estimations for the {\it subpolynomial mixing case} in which 
 \begin{align}
 \beta_{Z_{1:n}}(m)\leq m^{-r},&\qquad\mbox{for some $r>1$.}
 \end{align}

 \section{Generalization bounds via the   Rademacher theory
\label{p:tools}}

{This section presents some of the main ideas within the so-called ``Rademacher theory'', interpreted in terms of confidence intervals for regression schemes with general loss functions. We begin in Section \ref{secradcom} by presenting the notion of Rademacher complexity and some of its properties; then, in Section \ref{seclardevrad}, we  present some deviation inequalities involving the Rademacher complexity which are relatively straightforward applications of McDiarmid's inequality. Section \ref{secmaslem} presents the classical relation, known as ``Massart's lemma'', between the Rademacher complexity and the geometric notion of covering numbers, including some applications. Finally, in Section \ref{secentest}, we introduce the notion of {\it entropy estimates}, which allow us to illustrate via some relevant examples the way in which the estimates previously developed apply to particular sets of hypotheses.
 
 \subsection{Rademacher complexity}
 \label{secradcom}
 We begin by reminding the following:
 
 \begin{dfn}
A Rademacher sequence of lenght $n$ is an i.i.d. sequence $U_{1:n}$ with
\begin{align}
\PPro{U_{1}=1}=\PPro{U_{1}=-1}=\frac{1}{2}.\fe
\end{align}
 \end{dfn}
 
 The essential notion for what follows is that of {\it Rademacher complexity} (for families of sequential functions), defined as:
 
 \begin{dfn}
 \label{defradcom}
 The empirical Rademacher complexity of a   family of sequential  functions $\cH_{1:n}\subset \calL_{S_{1:n}}^{\otimes}
$ at    $z_{1:n}$ is defined as 
\begin{align}
\label{equdefradcomemp}
\Radcemp(\cH_{1:n},z_{1:n})\eqd \Esp{\sup_{h_{1:n}\in \cH_{1:n}}(U_{1:n}\cdot h_{1:n}(z_{1:n}))}
\end{align}
where $U_{1:n}$ a Rademecher sequence. 
If $Z_{1:n}$ is a random element of $S_{1:n}^{\otimes}$, the Rademacher complexity of $\cH_{1:n}$ with respect to $Z_{1:n}$ is defined as
\begin{align}
\label{equdefradcomave}
\Radcave(\cH_{1:n},Z_{1:n})
\eqd \Esp{\Radcemp(\cH_{1:n},Z_{1:n})}=
\Esp{\sup_{h_{1:n}\in \cH_{1:n}}(U_{1:n}\cdot h_{1:n}(Z_{1:n})},
\end{align}
where $U_{1:n}$ is a Rademacher sequence independent of $Z_{1:n}$. 

If $S_{1:n}=S^{n}$ and $\cH\subset \calL(S)$ is given, we define
\begin{align}
\label{equdefradcomclafam}
\Radcemp(\cH,z_{1:n})\eqdef \Radcemp(diag(\cH)_{1:n},z_{1:n}),&\qquad \Radcave(\cH,Z_{1:n})\eqdef \Radcave(diag(\cH)_{1:n},Z_{1:n}),
\end{align}
where $diag(\cH)_{1:n}$ is defined by \eqref{equdefdiafam}.\fe
\end{dfn}
\brem
Notice that in this definition $\Radcemp$ can be interpreted as a particular instance of $\Radcave$ (consider a point measure at $z_{1:n}$). We keep both definitions separate in order to eventually use the inequality
\begin{align}
\label{equuppbourad}
\Radcave(\cH_{1:n},Z_{1:n})\leq \sup_{z_{1:n}}\Radcemp(\cH_{1:n},z_{1:n}) 
\end{align}
where the $\sup$ is taken over a set $S\subset S_{1:n}^{\otimes}$ supporting $Z_{1:n}$ ($\PP(Z_{1:n}\in S)=1$). This permits to upper estimate $\Radcave(\cH_{1:n},Z_{1:n})$ uniformly over all distributions supported on $S$.\fe
\erems

\bp
\label{proproradcom}
With the notation  \eqref{equdefdirsum}, \eqref{equdefconpri}, the inequalities 
\begin{align}
\label{equproradcom}
\Radcemp(\cH_{1:n}^{(1)}+\cH_{1:n}^{(2)},z_{1:n})=&\Radcemp(\cH_{1:n}^{(1)},z_{1:n})+\Radcemp(\cH_{1:n}^{(2)},z_{1:n})\\
\Radcemp(co(\cH_{1:n}),z_{1:n})=&\Radcemp(\cH_{1:n},z_{1:n})
\label{equprodadcomconco}\\
\Radcemp(cobal( \cH_{1:n}),z_{1:n})\leq & 2 \Radcemp(\cH_{1:n},z_{1:n})
\label{equprodadcomcon}
\end{align} 
hold for every $z_{1:n}\in S_{1:n}^{\otimes}$ and  $\cH_{1:n}, \cH_{1:n}^{(k)}\subset \calL_{S_{1:n}}^{\otimes}$ ($k=1,2$). The same inequalities hold (by integration) when $\Radcemp(\cdot,z_{1:n})$ is replaced by $\Radcave(\cdot,Z_{1:n})$, where $Z_{1:n}$ is a random element of $S_{1:n}$, provided that these complexities are finite.  
 \ep
 
 \brem
 The inequality \eqref{equprodadcomcon} is tight (but see Remark \ref{remequradcob}): if $n=2$ and  $\cH_{1:2}=\{(1,0),(0,1)\}$ where $(a,b)$ denotes the constant sequential functions $(h_{1}(z_{1}),h_{2}(z_{2}))=(a,b)$, then for every $(z_{1},z_{2})$
 \begin{align}
 \Radcemp(cobal( \cH_{1:2}),z_{1:2})=1=2\,\Radcemp( \cH_{1:2},z_{1:2}).
 \end{align}
 \erems
 
 \begin{proof}(of Proposition \ref{proproradcom})
 The equality \eqref{equproradcom} follows from the equality
 \begin{align}
 \label{equproequsupas}
 &\sup_{(h_{1:n}^{(1)},h_{1:n}^{(2)})\in \cH_{1:n}^{(1)}\times\cH_{1:n}^{(2)}}u_{1:n}\cdot(h_{1:n}^{(1)}(z_{1:n})+h_{1:n}^{(2)}(z_{1:n}))\notag\\
= &\sup_{h_{1:n}^{(1)}\in \cH_{1:n}^{(1)}}u_{1:n}\cdot h_{1:n}^{(1)}(z_{1:n})+\sup_{h_{1:n}^{(2)}\in \cH_{1:n}^{(2)}}u_{1:n}\cdot h_{1:n}^{(2)}(z_{1:n}),
 \end{align}
 valid for every $u_{1:n}\in \{-1,1\}^{n}$. The proof of ``$\leq$'' in \eqref{equproequsupas}   is obvious,
 whereas ``$\geq$'' follows from the following observation:  given  $\epsilon>0$ and $u_{1:n}\in\{-1,1\}^{n}$, there exist  $(h_{1:n}^{(1,u_{1:n})},h_{1:n}^{(2,u_{1:n})})\in \cH_{1:n}^{(1)}\times \cH_{1:n}^{(2)}$ such that 
 \begin{align}
& \sup_{h_{1:n}^{(1)}\in \cH_{1:n}^{(1)}}u_{1:n}\cdot h_{1:n}^{(1)}(z_{1:n})+\sup_{h_{1:n}^{(2)}\in \cH_{1:n}^{(2)}}u_{1:n}\cdot h_{1:n}^{(2)}(z_{1:n})\notag\\
\leq& u_{1:n}\cdot(h_{1:n}^{(1,u_{1:n})}(z_{1:n})+h_{1:n}^{(2,u_{1:n})}(z_{1:n}))+\epsilon\notag\\
\leq&\sup_{(h_{1:n}^{(1)},h_{1:n}^{(2)})\in \cH_{1:n}^{(1)}\times\cH_{1:n}^{(2)}}u_{1:n}\cdot(h_{1:n}^{(1)}(z_{1:n})+h_{1:n}^{(2)}(z_{1:n}))+\epsilon
\label{equproannine}
 \end{align}
 which gives the conclusion by  letting $\epsilon\to 0$.
 
 The proof of \eqref{equprodadcomconco} is left to the reader. 
 
 To prove \eqref{equprodadcomcon} notice that, by \eqref{equcpvscoequ} and \eqref{equprodadcomconco}, if $U_{1:n}$ is a Rademacher sequence,
 \begin{align}
\Radcemp(cobal( \cH_{1:n}),z_{1:n})=&\Radcemp(\cH_{1:n}\cup(-\cH_{1:n}),z_{1:n})
\notag\\
=&\Esp{\max\{\sup_{h_{1:n}\in \cH_{1:n}}U_{1:n}\cdot h_{1:n}(z_{1:n}),\sup_{h_{1:n}\in \cH_{1:n}}(-U_{1:n})\cdot h_{1:n}(z_{1:n})\}}
\notag\\
\leq&\Esp{\sup_{h_{1:n}\in \cH_{1:n}}U_{1:n}\cdot h_{1:n}(z_{1:n})}+\Esp{\sup_{h_{1:n}\in \cH_{1:n}}(-U_{1:n})\cdot h_{1:n}(z_{1:n})}\notag\\
=& 2 \Radcemp(\cH_{1:n},z_{1:n}),
 \end{align}
 where we used the fact that $-U_{1:n}$ is a Rademacher sequence.\fe

 \end{proof}
 \brem
 \label{remequradcob}
 It is clear from the proof of \eqref{equprodadcomcon} that if for every $h_{1:n}\in \cH_{1:n}$ there exists $h_{1:n}'\in \cH_{1:n}$ with 
 \begin{align}
 h_{1:n}(z_{1:n})=-h_{1:n}'(z_{1:n})
 \end{align}
  then
 \begin{align}
 \label{equradcomcpeqrancomspe}
\Radcemp(cobal( \cH_{1:n}),z_{1:n})=\Radcemp(\cH_{1:n},z_{1:n}).\fe
 \end{align}
 \erems
 
\brem
\label{remuppboufunbetcon}
Clearly, \eqref{equprodadcomconco} implies that for any $\cH_{1:n}'\subset\cH_{1:n}\subset co(\cH_{1:n}')$
\begin{align}
\Radcemp(\cH_{1:n},z_{1:n})=&\Radcemp(\cH_{1:n}',z_{1:n}),
\end{align}
with the respective analogous consequence for $\Radcave(\cH_{1:n},Z_{1:n})$.
\erem
 
\subsection{Large deviations via Rademacher estimates}
\label{seclardevrad}

The following lemma, which we prove for the sake of completeness, is a typical tool in the nonasymptotic analysis of empirical minimization.
\bl
\label{lemsym}
If $Z_{1:n}$ has independent components, then for any fixed $u_{1:n}\in \{-1,1\}^{n}$
\begin{align}
\Esp{\sup_{h_{1:n}\in \cH_{1:n}}u_{1:n}\cdot(h_{1:n}(Z_{1:n})-\Esp{h_{1:n}(Z_{1:n})})
}\leq 2\Radcave(\cH_{1:n},Z_{1:n}).
\end{align}
\el
\proof 
 Pick a sequence $Z_{1:n}'$ of random variables with $Z_{1:n}'\sim Z_{1:n}$ and $Z_{1:n}'$ independent of $Z_{1:n}$. 
  Notice that for every $u_{1:n}' \in \{-1, 1\}^{n}$,
\begin{align}
\Esp{\sup_{h_{1:n}\in \cH_{1:n}}u_{1:n}'\cdot (h_{1:n}(Z_{1:n})-h_{1:n}(Z_{1:n}'))
}
=\Esp{\sup_{h_{1:n}\in \cH_{1:n}}{u}_{1:n}\cdot (h_{1:n}(Z_{1:n})-h_{1:n}(Z_{1:n}')
},
\label{equradconcon}
\end{align}
by the independence of $(Z_{1:n},Z_{1:n}')$ and because $Z_{1:n}\sim Z_{1:n}'$ (given $J\subset \{1,\dots,n\}$, exchange of signs in $h_{J}$  for all $h_{1:n}\in \cH_{1:n}$ does not affect the finite-dimensional distributions of $(h_{1:n}(Z_{1;n})-h_{1:n}(Z_{1:n}'))_{h
\in \cH}$). 
This implies that for every Rademacher sequence $U_{1:n}'$ independent of  $(Z_{1:n},Z_{1:n}')$, the equality
\begin{align}
\Esp{\sup_{h_{1:n}\in \cH_{1:n}}{U}_{1:n}'\cdot (h_{1:n}(Z_{1:n}')-h_{1:n}(Z_{1:n}))}=\Esp{\sup_{h_{1:n}\in \cH_{1:n}}{u}_{1:n}\cdot (h_{1:n}(Z_{1:n}')-h_{1:n}(Z_{1:n})}
\end{align} 
holds (condition on $U_{1:n}'$ and use \eqref{equradconcon}).

 It follows by monotonicity of the (conditional) expectation  that 
\begin{align}
&\Esp{\sup_{h_{1:n}\in \cH_{1:n}}u_{1:n}\cdot(h_{1:n}(Z_{1:n})-\Esp{h_{1:n}(Z_{1:n})})}\notag\\
=&\Esp{\sup_{h_{1:n}\in \cH_{1:n}}\Esp{u_{1:n}\cdot({h_{1:n}(Z_{1:n})}-h_{1:n}(Z_{1:n}'))
|Z_{1:n}}}
\notag\\
\leq & \Esp{\sup_{h_{1:n}\in \cH_{1:n}}
u_{1:n}\cdot({h_{1:n}(Z_{1:n})}-h_{1:n}(Z_{1:n}'))
}
\notag\\
=&\Esp{\sup_{h_{1:n}\in \cH_{1:n}}U_{1:n}'\cdot({h_{1:n}(Z_{1:n})}-h_{1:n}(Z_{1:n}'))
}\notag\\
\leq &\Esp{\sup_{h_{1:n}\in \cH_{1:n}}U_{1:n}'\cdot{h_{1:n}(Z_{1:n})}}+\Esp{\sup_{h_{1:n}\in \cH_{1:n}}(-U_{1:n}')\cdot h_{1:n}(Z_{1:n}')
}
\notag\\
=&2\,\Radcave(\cH_{1:n},Z_{1;n}).~\finproof
\end{align}
   
We also remind the following version of  {McDiarmid}'s equality \cite[Theorem A.2]{gyor:kohl:krzy:walk:02}. \cite[Lemma A.4]{gyor:kohl:krzy:walk:02})
\bl
\label{lemazuhoemcdia}
If $Z_{1:n}$ is a random element of $S_{1:n}^{\otimes}$ with independent components and if  $K:S_{1:n}^{\otimes}\to \R$ is a Borel measurable function with 
\begin{align}
\label{equconazumcdia}
|\Esp{K(Z_{1:k-1},z_{k},Z_{k+1:n})|Z_{1:k-1}}-\Esp{K(Z_{1:k-1},z_{k}',Z_{k+1:n})|Z_{1:k-1}}|\leq c_{k}
\end{align}
$\PP$-a.s., for $\PP_{Z_{k}}\times \PP_{Z_{k}}$-almost every $(z_{k},z_{k}')$ and for some  $c_{1:n}\in [0,\infty]^{n}$ then, for $u=\pm 1$ and every $\epsilon>0$, 
\begin{align}
\PP(u(K(Z_{1:n})-\Esp{K(Z_{1:n})})>\epsilon)\leq \exp\left(-2({\epsilon}/{|c_{1:n}|_{n,2}})^{2}\right).\fe
\end{align}
\el

This has the following straightforward consequence:
\begin{thm}
\label{corunidevsupaveind}
 Let $\cH_{1:n}\subset \calL_{S_{1:n}}^{\otimes}$,  let $Z_{1:n}$ be a random element of $S_{1:n}^{\otimes}$ with independent components, 
  and let $H_{1:n}:S_{1:n}^{\otimes}\to [0,\infty]^{n}$ be defined as in \eqref{equdefseqfuncaph}.
 Then for every  $(u_{1:n},\epsilon)\in \{-1,1\}^{n}\times (0,\infty)$,

\begin{align}
&\PP\left({\sup_{h_{1:n}\in \cH_{1:n}}
u_{1:n}\cdot(h_{1:n}(Z_{k})-\Esp{h_{1:n}(Z_{1:n})})} >\epsilon+ 2\Radcave(\cH_{1:n},Z_{1:n}) \right)\notag\\
\leq &\exp\left(-(\epsilon/\sqrt{2} ||\,|H_{1:n}(Z_{1:n})|_{n,2}||_{\PP,\infty})^{2}
\right).
\label{equdevsupind}
\end{align}
\end{thm}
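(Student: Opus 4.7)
The plan is to apply McDiarmid's inequality (Lemma \ref{lemazuhoemcdia}) to the random variable
\begin{align}
K(Z_{1:n})\eqdef \sup_{h_{1:n}\in \cH_{1:n}} u_{1:n}\cdot (h_{1:n}(Z_{1:n})-\Esp{h_{1:n}(Z_{1:n})}),
\end{align}
and then to replace the centering $\Esp{K(Z_{1:n})}$ by the Rademacher bound $2\Radcave(\cH_{1:n},Z_{1:n})$ via Lemma \ref{lemsym}.

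First I would verify the bounded-difference hypothesis \eqref{equconazumcdia}. Pointwise, the triangle inequality together with the fact that the expectation term does not depend on $(z_k,z_k')$ gives
\begin{align}
|K(z_{1:k-1},z_k,z_{k+1:n})-K(z_{1:k-1},z_k',z_{k+1:n})|\leq \sup_{h_{1:n}\in\cH_{1:n}}|h_k(z_k)|+\sup_{h_{1:n}\in\cH_{1:n}}|h_k(z_k')|= H_k(z_k)+H_k(z_k'),
\end{align}
since the only coordinate that changes in $u_{1:n}\cdot h_{1:n}$ is the $k$-th one and $|u_k|=1$. Taking conditional expectations given $Z_{1:k-1}$ and using the definition of the essential supremum together with \eqref{equexcsupnor}, we obtain \eqref{equconazumcdia} with $c_k=2\,\|H_k(Z_k)\|_{\PP,\infty}$, which is valid for $\PP_{Z_k}\otimes \PP_{Z_k}$-a.e. $(z_k,z_k')$.

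Next I would combine and compute the norm appearing in McDiarmid's bound. Since the components of $Z_{1:n}$ are independent, \eqref{equequl2norlinfnorlinfnorl2nor} applies and yields
\begin{align}
|c_{1:n}|_{n,2}=2\,\bigl|\,\|H_{1:n}(Z_{1:n})\|_{\PP,\infty}\,\bigr|_{n,2}=2\,\bigl\|\,|H_{1:n}(Z_{1:n})|_{n,2}\,\bigr\|_{\PP,\infty}.
\end{align}
Lemma \ref{lemazuhoemcdia} applied with $u=+1$ then gives
\begin{align}
\PPro{K(Z_{1:n})-\Esp{K(Z_{1:n})}>\epsilon}\leq \exp\!\left(-\frac{2\epsilon^2}{|c_{1:n}|_{n,2}^{2}}\right)=\exp\!\left(-\left(\frac{\epsilon}{\sqrt 2\,\|\,|H_{1:n}(Z_{1:n})|_{n,2}\,\|_{\PP,\infty}}\right)^{2}\right).
\end{align}
Finally, Lemma \ref{lemsym} provides $\Esp{K(Z_{1:n})}\leq 2\Radcave(\cH_{1:n},Z_{1:n})$, so the event inside the probability in \eqref{equdevsupind} is contained in $\{K(Z_{1:n})-\Esp{K(Z_{1:n})}>\epsilon\}$, which concludes the argument.

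The only nontrivial step is the verification of \eqref{equconazumcdia}, which requires swapping the essential-supremum bound on $H_k(Z_k)$ for a pointwise almost-everywhere bound on $(z_k,z_k')$; this is where the assumption of independent components enters twice, once through \eqref{equequl2norlinfnorlinfnorl2nor} to rewrite $|c_{1:n}|_{n,2}$ and once through Lemma \ref{lemsym}. The case $\|\,|H_{1:n}(Z_{1:n})|_{n,2}\,\|_{\PP,\infty}=\infty$ is trivial because the right-hand side of \eqref{equdevsupind} is then understood as $1$ under the usual conventions.
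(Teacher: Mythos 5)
Your proposal is correct and follows essentially the same route as the paper: define $K(z_{1:n})=\sup_{h_{1:n}}u_{1:n}\cdot(h_{1:n}(z_{1:n})-\Esp{h_{1:n}(Z_{1:n})})$, verify the bounded-difference condition with $c_k=2\|H_k(Z_k)\|_{\PP,\infty}$, apply Lemma \ref{lemazuhoemcdia} using \eqref{equequl2norlinfnorlinfnorl2nor} to evaluate $|c_{1:n}|_{n,2}$, and replace $\Esp{K(Z_{1:n})}$ by $2\Radcave(\cH_{1:n},Z_{1:n})$ via Lemma \ref{lemsym}. You spell out the verification of \eqref{equconazumcdia} in slightly more detail than the paper does, but the argument is the same.
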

\proof 
Take 
\begin{align}
K(z_{1:n})\eqd \sup_{h_{1:n}\in \cH_{1:n}}u_{1:n}\cdot(h_{1:n}(z_{1:n})-\Esp{h_{1:n}(Z_{1:n})}).
\end{align}
In this case 
\begin{align}
\label{equbousupinfk}
|K(Z_{1:k-1},z_{k},Z_{k+1:n})-K(Z_{1:k-1},z_{k}',Z_{k+1:n})|\leq 2||H_{k}(Z_{k})||_{\PP,\infty}
\end{align}
 for $\PP_{Z_{k}}\times\PP_{Z_{k}}-$a.e. $(z_{k},z_{k}')$). Combining Lemma \ref{lemsym}  with Lemma \ref{lemazuhoemcdia} we obtain
\begin{align}
\PPro{K(Z_{1:n})>\epsilon+ 2\Radcave(\cH_{1:n},Z_{1:n}) }{\leq} & \PPro{K(Z_{1:n})>\epsilon+ \Esp{K(Z_{1:n})} }\notag\\
{\leq}& \exp\left(-(\epsilon/\sqrt{2}||\,|H_{1:n}(Z_{1:n})|_{n,2}||_{\PP,\infty})^{2}
\right),
\label{equappmcdiasup}
\end{align}
where the first inequality follows from Lemma \ref{lemsym} and the second follows from Lemma \ref{lemazuhoemcdia}, \eqref{equbousupinfk} and \eqref{equequl2norlinfnorlinfnorl2nor}.\fe  

\brem\footnote{This remark is a transcription of a comment by H-D Nguyen.}
\label{remhoa}
If $\cH_{1:n}\subset \calL_{S_{1:n}}^{\otimes}$ is a sequential family of nonnegative functions, in the sense that
\begin{align}
\label{equhypposlosfun}
 \mbox{\it for every $h_{1:n}\in \cH_{1:n}$ and every $1\leq k\leq n$, $h_{k}(Z_{k})\geq 0$, $\PP-$a.s.}
 \end{align}
  then we the following refinement of \eqref{equdevsupind} holds:
  \begin{align}
&\PP\left({\sup_{h_{1:n}\in \cH_{1:n}}
u_{1:n}\cdot(h_{1:n}(Z_{k})-\Esp{h_{1:n}(Z_{1:n})})} >\epsilon+ 2\Radcave(\cH_{1:n},Z_{1:n}) \right)\notag\\
\leq &\exp\left(-2(\epsilon/ ||\,|H_{1:n}(Z_{1:n})|_{n,2}||_{\PP,\infty})^{2}
\right),
\label{equdevsupindref}
\end{align}
as can easily be seen by an examination of the argument leading to \eqref{equbousupinfk} (which allows to get rid of the factor $2$ under \eqref{equhypposlosfun}) and the steps underneath it: an equivalent formulation of this is that, under \eqref{equhypposlosfun}, we can replace $\epsilon$ by $\epsilon/2$ at the left-hand side of \eqref{equdevsupind}.
\erems
\medskip

We introduce now,  for notational convenience, the functionals defined for $(u_{1:n},h_{1:n},z_{1:n})\in\{-1,1\}^{n}\times \cH_{1:n}\times S_{1:n}^{\otimes}$ by
\begin{align}
\label{funauxcrefratprosinh}
w_{h_{1:n}}(z_{1:n})=&u_{1:n}\cdot (\Esp{h_{1:n}(Z_{1:n}')}-h_{1:n}(z_{1:n}))\\
w(z_{1:n})=&\sup_{h_{1:n}\in \cH_{1:n}}w_{h_{1:n}}(z_{1:n})\\
\label{funauxcrefratprok}
k_{h_{1:n}}(z_{1:n})=&\min\{k\in \{0,\dots, n\}: w_{h_{1:n}}(z_{1:n})\leq kw(z_{1:n})/n\}.
\end{align}
where $Z_{1:n}'$ is an independent copy of $Z_{1:n}$\footnote{This ``ghost sample'' has a purely auxiliary value here: one can avoid it by changing some of the expectations below by integrals with respect to the law of $Z_{1:n}$.}. Notice in particular that, if $w_{h_{1:n}}(z_{1:n})\leq 0$,  then $k_{h_{1:n}}(z_{1:n})=0$. In the case in which $w_{h_{1:n}}(z_{1:n})> 0$, $k_{h_{1:n}}(z_{1:n})$ is just the integer part of $n (w_{h_{1:n}}(z_{1:n})/w(z_{1:n}))$. 

\begin{thm}
\label{thegenuppbou}
Under the hypotheses of Theorem \ref{corunidevsupaveind}, let
 \begin{align}
 \label{empandglosolgen}
 \tilde{h}_{1:n}\in \arg\min_{h_{1:n}\in \cH_{1:n}}
u_{1:n}\cdot\Esp{h_{1:n}(Z_{1:n})},&\qquad\hat{h}_{1:n}=\hat{h}_{1:n,Z_{1:n}}\in \arg\min_{h_{1:n}\in \cH_{1:n}}
u_{1:n}\cdot h_{1:n}(Z_{1:n}).
\end{align}

Then for any $(\epsilon,\eta,k)\in[0,\infty)\times[0,\infty)\times\{0,\dots,n\}$,  and with the notation \eqref{funauxcrefratprosinh}- \eqref{funauxcrefratprok}, the bound
\begin{align}
&\PPro{u_{1:n}\cdot\Esp{(\hat{h}_{1:n}(Z_{1:n}')-\tilde{h}_{1:n}(Z_{1:n}'))|Z_{1:n}} \geq \epsilon+\eta+
2\Radcave(\cH_{1:n},Z_{1:n})k/n \big| k_{\hat{h}_{1:n}}(Z_{1:n})=k}\notag\\
\leq&\exp\left(-(\epsilon n/\sqrt{2} k ||\,|H_{1:n}(Z_{1:n})|_{n,2}||_{\PP,\infty})^{2}
\right)\un_{\{k>0\}}+\PPro{-w_{\tilde{h}_{1:n}}(Z_{1:n})\geq\eta}.
\label{equboutheupebougen}
\end{align}
holds.

\brem
By considering the case $\cH_{1:n}=\{\tilde{h}_{1:n}\}$ and exchanging the signs in $u_{1:n}$, the bound
\begin{align}
\label{equuppboudevhtil}
\PPro{-w_{\tilde{h}_{1:n}}(Z_{1:n})\geq\eta}\leq \exp(-(\eta/(\sqrt{2}||\,|\tilde{h}_{1:n}(Z_{1:n})|_{n,2}||_{\PP,\infty}))^{2})
\end{align} 
follows from \eqref{equdevsupind}. The bound \eqref{equuppboudevhtil} may nonetheless be suboptimal, and alternatives to \eqref{equdevsupind} (like Bernstein's inequality \cite[Lemma A.2]{gyor:kohl:krzy:walk:02}) may in some cases give a better upper bound. 
\erems
\brem
The upper bound \eqref{equboutheupebougen} should be compared with the more classically flavoured upper bound \eqref{equdeverrlosradcom} presented below, which follows from \eqref{equboutheupebougen}. It is clear that \eqref{equboutheupebougen} it is strictly tighter than \eqref{equdeverrlosradcom} (consider for instance the case in which $\tilde{h}_{1:n}$ is constant).
\erems
\end{thm}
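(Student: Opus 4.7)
The plan is to decompose the excess $E:=u_{1:n}\cdot\Esp{(\hat{h}_{1:n}(Z_{1:n}')-\tilde{h}_{1:n}(Z_{1:n}'))|Z_{1:n}}$ into three summands that can be treated separately. Using the independence of the ghost sample $Z_{1:n}'$ from $Z_{1:n}$ and adding and subtracting both $u_{1:n}\cdot\hat{h}_{1:n}(Z_{1:n})$ and $u_{1:n}\cdot\tilde{h}_{1:n}(Z_{1:n})$, one first obtains
\begin{align}
E = w_{\hat{h}_{1:n}}(Z_{1:n}) + u_{1:n}\cdot(\hat{h}_{1:n}(Z_{1:n})-\tilde{h}_{1:n}(Z_{1:n})) - w_{\tilde{h}_{1:n}}(Z_{1:n}),
\end{align}
with $w_{h_{1:n}}$ as in \eqref{funauxcrefratprosinh}. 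The empirical minimality of $\hat{h}_{1:n}$ in \eqref{empandglosolgen} makes the middle summand nonpositive, so $E\leq w_{\hat{h}_{1:n}}(Z_{1:n}) - w_{\tilde{h}_{1:n}}(Z_{1:n})$.

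Next, I exploit the conditioning event: by the definition of $k_{\hat{h}_{1:n}}$ in \eqref{funauxcrefratprok}, on $\{k_{\hat{h}_{1:n}}(Z_{1:n})=k\}$ one has $w_{\hat{h}_{1:n}}(Z_{1:n})\leq k\,w(Z_{1:n})/n$, whence
\begin{align}
E \leq \frac{k}{n}\,w(Z_{1:n}) - w_{\tilde{h}_{1:n}}(Z_{1:n}) \qquad \text{on } \{k_{\hat{h}_{1:n}}(Z_{1:n})=k\}.
\end{align}
Consequently, the inequality $E \geq \epsilon+\eta+2\Radcave(\cH_{1:n},Z_{1:n})k/n$ on this event forces at least one of $w(Z_{1:n})\geq \epsilon n/k + 2\Radcave(\cH_{1:n},Z_{1:n})$ (only relevant for $k>0$) or $-w_{\tilde{h}_{1:n}}(Z_{1:n})\geq \eta$ to hold: otherwise, summing the complementary strict inequalities would contradict the lower bound on $E$, and in the case $k=0$ the second alternative is directly forced.

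A union bound then delivers the estimate. The first alternative is precisely the uniform one-sided deviation of the empirical process indexed by $\cH_{1:n}$: Theorem~\ref{corunidevsupaveind} applied with $-u_{1:n}$ in place of $u_{1:n}$ (so that $w(Z_{1:n})=\sup_{h_{1:n}\in\cH_{1:n}}(-u_{1:n})\cdot(h_{1:n}(Z_{1:n})-\Esp{h_{1:n}(Z_{1:n}')})$ appears on the correct side of the inequality) gives the factor $\exp(-(\epsilon n/(\sqrt{2}\,k\,||\,|H_{1:n}(Z_{1:n})|_{n,2}\,||_{\PP,\infty}))^{2})\,\un_{\{k>0\}}$; the second alternative contributes $\PPro{-w_{\tilde{h}_{1:n}}(Z_{1:n})\geq \eta}$. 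I expect the main obstacle to be purely bookkeeping---correctly passing between the joint event $\{E\geq\cdots\}\cap\{k_{\hat{h}_{1:n}}(Z_{1:n})=k\}$ and the corresponding conditional probability, and invoking Theorem~\ref{corunidevsupaveind} with the reversed sign so as to control $w(Z_{1:n})$ itself rather than its negative. The conceptual payoff of the argument is that the definition of $k_{\hat{h}_{1:n}}$ sharpens the Rademacher contribution from the flat $2\Radcave(\cH_{1:n},Z_{1:n})$ down to $2\Radcave(\cH_{1:n},Z_{1:n})k/n$, which is exactly the source of the tightening over the more classical bound mentioned in the remark following the statement.
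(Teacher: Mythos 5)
Your proposal is correct and follows essentially the same route as the paper: the same decomposition of the conditional excess risk via $w_{\hat{h}_{1:n}}$ and $-w_{\tilde{h}_{1:n}}$ with the middle term killed by empirical minimality, the same use of the definition of $k_{\hat{h}_{1:n}}$ on the conditioning event, and the same union bound followed by an application of \eqref{equdevsupind} with the sign of $u_{1:n}$ reversed. Your explicit remarks on the $k=0$ case and on the sign reversal needed to control $w(Z_{1:n})$ are consistent with (and slightly more careful than) the paper's own write-up.
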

\begin{proof} (of Theorem \ref{thegenuppbou})
By the definition of $\hat{h}_{1:n}, \tilde{h}_{1:n}$, $Z_{1:n}'$, and the functionals \eqref{funauxcrefratprosinh}- \eqref{funauxcrefratprok}, we get

\begin{align}
 u_{1:n}\cdot\Esp{(\hat{h}_{1:n}(Z_{1:n}')-\tilde{h}_{1:n}(Z_{1:n}'))|Z_{1:n}} 
=&w_{\hat{h}_{1:n}}(Z_{1:n})+u_{1:n}\cdot\left(\hat{h}_{1:n}(Z_{1:n})-\Esp{\tilde{h}_{1:n}(Z_{1:n}')}\right)
\notag\\
\leq&w_{\hat{h}_{1:n}}(Z_{1:n})-w_{\tilde{h}_{1:n}}(Z_{1:n})
\notag\\
\leq &{k_{\hat{h}_{1:n}}(Z_{1:n})w(Z_{1:n})}/{n}-w_{\tilde{h}_{1:n}}(Z_{1:n}).
\notag\\
\end{align}
From where it follows via the union bound and \eqref{equdevsupind} that for every $(\epsilon,\eta,k)\in [0,\infty)\times[0,\infty)\times\{0,\dots,n\}$
\begin{align}
&\PPro{a_{1:n}\cdot\Esp{(\hat{h}_{1:n}(Z_{1:n}')-\tilde{h}_{1:n}(Z_{1:n}'))|Z_{1:n}}>\epsilon+\eta+2\Radcave(\cH_{1:n},Z_{1:n})k/n\big| k_{\hat{h}_{1:n}}(Z_{1:n})=k}
\notag\\
\leq &\PPro{k_{\hat{h}_{1:n}}(Z_{1:n})w(Z_{1:n})/{n}>\epsilon +
2\Radcave(\cH_{1:n},Z_{1:n})k/n\big|k_{\hat{h}_{1:n}}(Z_{1:n})=k} +\PPro{-w(\tilde{h}_{1:n},Z_{1:n})>\eta}\notag\\
= &\PPro{ w(Z_{1:n})>\epsilon n/k+
2\Radcave(\cH_{1:n},Z_{1:n})} \un_{\{k>0\}}+\PPro{-w(\tilde{h}_{1:n},Z_{1:n})>\eta}\notag\\
\leq&\exp\left(-(\epsilon n/(\sqrt{2} k ||\,|H_{1:n}(Z_{1:n})|_{n,2}||_{\PP,\infty}))^{2}
\right)\un_{\{k>0\}}+\PPro{-w(\tilde{h}_{1:n},Z_{1:n})>\eta}. \notag\\
&
\\
\notag \qed
\end{align}
\end{proof}

\bcor
\label{cortheradbou}
 Let $\cH_{1:n}\subset \calL_{S_{1:n}}^{\otimes}$,  let $Z_{1:n}$ be a random element of $S_{1:n}^{\otimes}$ with independent components, 
  let $H_{1:n}:S_{1:n}^{\otimes}\to [0,\infty]^{n}$ be defined as in \eqref{equdefseqfuncaph},   
 and let
 \begin{align}
 \label{empandglosolgenone}
 \tilde{h}_{1:n}\in \arg\min_{h_{1:n}\in \cH_{1:n}}
\sum_{k=1}^{n}\Esp{h_{k}(Z_{k})},&\qquad\hat{h}_{1:n}=\hat{h}_{1:n,Z_{1:n}}\in \arg\min_{h_{1:n}\in \cH_{1:n}}
\sum_{k=1}^{n}h_{k}(Z_{k}).
\end{align}
Then for any $\epsilon>0$ and any independent copy  $Z_{1:n}'$ of  $Z_{1:n}$
\begin{align}
\PP\left(\sum_{k=1}^{n}\Esp{(\hat{h}_{k}(Z_{k}')-\tilde{h}_{k}(Z_{k}'))|Z_{1:n}}>2(\epsilon + 
\Radcave(\cH_{1:n},Z_{1:n}))\right)\notag\\
\leq 2\exp\left(-\left({\epsilon}/{(\sqrt{2}||\,|H_{1:n}(Z_{1:n})|_{n,2}||_{\PP,\infty})}\right)^{2}\right).
\label{equdeverrlosradcom}
\end{align}
In particular, for any $\delta>0$, the inequality
\begin{align}
\sum_{k=1}^{n}\Esp{(\hat{h}_{k}(Z_{k}')-\tilde{h}_{k}(Z_{k}'))|Z_{1:n}}\leq 2\big(
{||\,|H_{1:n}(Z_{1:n})|_{n,2} \,||_{\PP,\infty}}\sqrt{2\log({2}/{\delta})}+ \Radcave(\cH_{1:n},Z_{1:n})
\big)
\label{equradestconint}
\end{align}
holds with probability at least $1-\delta$.
\ecor

\begin{proof}  Let us for convenience denote
\begin{align}
\label{equdeluppbou}
\delta(\epsilon,k):=\exp\left(-(\epsilon n/(\sqrt{2} k ||\,|H_{1:n}(Z_{1:n})|_{n,2}||_{\PP,\infty}))^{2}
\right)\un_{\{k>0\}}.
\end{align}
In the context of Theorem \ref{thegenuppbou}, considering the case in which $u_{1:n}:=(1,\dots,1)$, an application of \eqref{equboutheupebougen} gives that the left-hand side of \eqref{equdeverrlosradcom} is equal to
\begin{align}
&\PPro{u_{1:n}\cdot\Esp{\hat{h}_{1:n}(Z_{1:n}')-\tilde{h}(Z_{1:n}')|Z_{1:n}}\geq 2(\epsilon + 
\Radcave(\cH_{1:n},Z_{1:n}))}\notag\\
=&\sum_{k=0}^{n}\Esp{\PPro{u_{1:n}\cdot\Esp{\hat{h}_{1:n}(Z_{1:n}')-\tilde{h}(Z_{1:n}')|Z_{1:n}}\geq 2(\epsilon + 
\Radcave(\cH_{1:n},Z_{1:n}))|k_{\hat{h}_{1:n}}=k}\un_{\{k_{\hat{h}_{1:n}}=k\}}}\notag\\
\leq&\sum_{k=0}^{n}\delta(\epsilon,k)\PPro{k_{\hat{h}_{1:n}}=k}+\PPro{w_{\tilde{h}}(Z_{1:n})>\epsilon}
\leq2\delta(\epsilon,n).
\label{equdeverrlosradcomref}
\end{align}
where in the last inequality we used the fact that $\eqref{equdeluppbou}$ is increasing in $k$ to upper bound $\delta(\epsilon,k)$, and where we used the fact that  the right hand side of \eqref{equuppboudevhtil} is upper bounded by $\delta(\epsilon,n)$. Notice that \eqref{equdeverrlosradcom} is the inequality between the extremes of \eqref{equdeverrlosradcomref}. The upper bound \eqref{equradestconint} follows by equating $2\delta(\epsilon,n)$ to $\delta$ and solving for $\epsilon$.
\end{proof}

\brem
\label{rembetmixrad}
Applying the observations from Section \ref{secremdep} (see especially \eqref{equdevbetmix}) to the second term in \eqref{equappmcdiasup}, it is easy to show that if \eqref{equexpmixrat} holds, then  
\begin{align}
\PP\left(\frac{}{} \right.&{\sup_{h_{1:n}\in \cH_{1:n}}
u_{1:n}\cdot(h_{1:n}(Z_{k})-\Esp{h_{1:n}(Z_{1:n})})}\notag\\
&\left.>2\left\lceil\log_{r}(2n/\delta)\right\rceil(\epsilon+ \max_{1\leq k\leq \log_{r}(n/\delta)}
\Radcave(\cH_{J_{k}},Z_{J_{k}}^{*})) \right)\notag\\
\leq &2\left\lceil\log_{r}(2n/\delta)\right\rceil \exp\left(-\left({\epsilon}/{(\sqrt{2}\max_{1\leq k \leq \log_{r}(n/\delta)}||\,|H_{J_{k}}(Z_{J_{k}}^{*})|_{|J_{k}|,2}||_{\PP,\infty}})\right)^{2}\right)+\delta/2
\end{align}
where $J_{k}:=\{k+l\left\lceil\log_{r}(2n/\delta)\right\rceil\}_{l}\cap \{1,\dots,n\}$ and $Z_{1:n}^{*}$ is an independent sequence with the same marginals as $Z_{1:n}$. This carries further to the fact that, always under \eqref{equexpmixrat}, the conclusion of Corollary \ref{cortheradbou} holds if the right-hand side of \eqref{equradestconint} is replaced by
 \begin{align}
 &2^{3/2}\left\lceil\log_{r}(2n/\delta)\right\rceil\left(
{\max_{1\leq k \leq \log_{r}(2n/\delta)}||\,|H_{J_{k}}(Z_{J_{k}}^{*})|_{|J_{k}|,2}||_{\PP,\infty}}\sqrt{\log(4\left\lceil\log_{r}(2n/\delta)\right\rceil/\delta)}\right.\notag\\
+& \left.\max_{1\leq k\leq \log_{r}(2n/\delta)}\Radcave(\cH_{J_{k}},Z_{J_{k}}^{*})
\right).
\label{equradestconintbetmix}
\end{align}
(this estimate implicitly requires that $r^{-n}\leq \delta/2n\leq r^{-1}$).
\erems

\subsection{Massart's lemma and covering numbers}
\label{secmaslem}

Remember the definition of an $r-$covering in a semimetric space:
\begin{dfn}
\label{defcovnum}
Given a semimetric space $(\calL, d)$, $\cH\subset \calL$, and $r\in [0,\infty)$, an $r-$covering of $\cH$ with respect to $d$ is a set $\calL_{0}\subset\calL$ such that for every $h\in \cH$ there exists $l_{h}\in \calL_{0}$ with 
\begin{align}
d(l_{h},h)\leq r.
\end{align}
If $(\calL,||\cdot||)$ is a seminormed vector space, we define coverings with respect to $||\cdot||$ via the semimetric $d(l_{1},l_{2})=||l_{1}-l_{2}||$.
\end{dfn}

We proceed now to establish a relationship between the Rademacher complexities of a sequential family $\cH_{1:n}$ at $Z_{1:n}$ and the covering numbers associated to the empirical $L^{1}$-norm of $\cH_{1:n}$ at $Z_{1:n}$. 
\begin{dfn}
\label{defempconnum}
Consider the empirical $L^{1}-$norm of $\calL_{S_{1:n}}^{\otimes}$ at $z_{1:n}\in S_{1:n}^{\otimes}$, defined by
\begin{align}
|f_{1:n}|_{z_{1:n},1}\eqdef \frac{1}{n}\sum_{k=1}^{n}|f_{j}(z_{j})|
\end{align}
for every $f_{1:n}\in \calL_{S_{1:n}}^{\otimes}$.
Given a family of sequential functions $\cH_{1:n}\subset \calL_{S_{1:n}}^{\otimes}$ and $r\in[0,\infty)$, the $r$-convering number $\cN_{1}(\cH_{1:n},z_{1:n},r)$ is the smallest $m\in \N$ such that there exists an $r-$covering of size $m$ of $\cH_{1:n}$ with respect to $|\cdot|_{z_{1:n},1}$  (with the convention $\inf \emptyset=\infty$).
If $S_{1:n}=S^{n}$ and $\cH\subset \calL(S)$ is given, we define
\begin{align}
\cN_{1}(\cH,z_{1:n},r)\eqdef \cN_{1}(diag(\cH)_{1:n},z_{1:n},r),
\end{align}
where $diag(\cH)_{1:n}$ is as in \eqref{equdefdiafam}.\fe
\end{dfn}

The connection of covering numbers and Rademacher complexities is a consequence of Massart's lemma (see e.g. \cite[Theorem 3.7 page 35]{mohri2018foundations}).

\bl
\label{lemmaslem}
Given  $n\in \N$, a Rademacher sequence $U_{1:n}$, and a (finite) subset $\{a_{1:n}^{(k)}\}_{k=1}^{m}\subset \R^{n}$, the inequality
\begin{align}
\label{equbouespmaxfin}
\Esp{\max_{ k=1,\ldots,m}{(U_{1:n}\cdot a_{1:n}^{(k)}})}\leq \max_{k=1,\ldots,m}|a_{1:n}^{(k)}|_{n,2}\sqrt{2\log( m)}
\end{align}
holds.\fe
\el

Applying this result one shows easily that
\begin{thm}
\label{theappmasslem}
If $\cH_{1:n}\subset \calL_{S_{1:n}}^{\otimes}$ is a family of sequential functions, then for every $z_{1:n}\in S_{1:n}^{\otimes}$ and every $r>0$,
\begin{align}
\Radcemp(\cH_{1:n},z_{1:n})\leq rn+|H_{1:n}(z_{1:n})|_{n,2}\sqrt{2 \log (\cN_{1}(\cH_{1:n},z_{1:n},r) )}.
\label{equbouempradcomcovnum}
\end{align}
where $H_{1:n}:S_{1:n}^{\otimes}\to [0,\infty]^{n}$ is the sequential function defined by \eqref{equdefseqfuncaph}.
\end{thm}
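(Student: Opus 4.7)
The plan is to reduce the supremum defining $\Radcemp(\cH_{1:n},z_{1:n})$ to a maximum over a finite set of vectors and then invoke Lemma \ref{lemmaslem}. I may assume that $m := \cN_1(\cH_{1:n}, z_{1:n}, r) < \infty$, since otherwise \eqref{equbouempradcomcovnum} is trivially true. Fix a minimal $r$-cover $\{l^{(1)}_{1:n}, \ldots, l^{(m)}_{1:n}\} \subset \calL_{S_{1:n}}^{\otimes}$ of $\cH_{1:n}$ with respect to $|\cdot|_{z_{1:n},1}$.

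Because Definition \ref{defcovnum} permits covers external to $\cH_{1:n}$, the evaluations $l^{(k)}_j(z_j)$ need not be dominated by $H_j(z_j)$, and this must be remedied before applying Massart's lemma. The key step is a componentwise truncation. For each $1\leq j\leq n$, let $\pi_j:\R\to[-H_j(z_j),H_j(z_j)]$ denote the $1$-Lipschitz projection onto this interval, and define
\begin{align}
a^{(k)}:=(\pi_j(l^{(k)}_j(z_j)))_{j=1}^n\in\R^n,\qquad 1\leq k\leq m.
\end{align}
Since every $h_{1:n}\in\cH_{1:n}$ satisfies $h_j(z_j)\in[-H_j(z_j),H_j(z_j)]$ by the very definition of $H_j$, we have $\pi_j(h_j(z_j))=h_j(z_j)$, so the $1$-Lipschitz property yields
\begin{align}
\sum_{j=1}^n |a^{(k)}_j - h_j(z_j)| = \sum_{j=1}^n|\pi_j(l^{(k)}_j(z_j))-\pi_j(h_j(z_j))|\leq n\,|l^{(k)}_{1:n}-h_{1:n}|_{z_{1:n},1}.
\end{align}
Hence, for every $h_{1:n}\in\cH_{1:n}$ there exists an index $k_{h_{1:n}}$ with $\sum_j|a^{(k_{h_{1:n}})}_j - h_j(z_j)|\leq rn$, while by construction $|a^{(k)}|_{n,2}\leq |H_{1:n}(z_{1:n})|_{n,2}$ uniformly in $k$.

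With these facts in hand, I will write, for any Rademacher sequence $U_{1:n}$ and any $h_{1:n}\in\cH_{1:n}$,
\begin{align}
U_{1:n}\cdot h_{1:n}(z_{1:n}) \leq U_{1:n}\cdot a^{(k_{h_{1:n}})} + \sum_{j=1}^{n}|h_j(z_j) - a^{(k_{h_{1:n}})}_j| \leq \max_{1\leq k\leq m} U_{1:n}\cdot a^{(k)} + rn,
\end{align}
take the supremum over $h_{1:n}\in\cH_{1:n}$ of the leftmost member (which affects neither the maximum nor the $rn$ term on the right), and then take expectation to conclude via Lemma \ref{lemmaslem} that
\begin{align}
\Radcemp(\cH_{1:n},z_{1:n}) \leq rn + \max_{1\leq k\leq m} |a^{(k)}|_{n,2}\sqrt{2\log m} \leq rn + |H_{1:n}(z_{1:n})|_{n,2}\sqrt{2\log m}.
\end{align}

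The only mildly delicate point is the truncation step, needed precisely because Definition \ref{defcovnum} admits covers external to $\cH_{1:n}$; everything else reduces to the triangle inequality and a direct application of Lemma \ref{lemmaslem}.
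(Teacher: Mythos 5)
Your proof is correct and follows essentially the same route as the paper's: truncate the cover componentwise so that its elements are dominated by $H_{1:n}(z_{1:n})$ (the paper disposes of this in a parenthetical ``truncate by components if necessary,'' which you justify explicitly via the $1$-Lipschitz projection), then apply the triangle inequality and Lemma \ref{lemmaslem}. No gaps; the extra care with the normalization of $|\cdot|_{z_{1:n},1}$ (giving the $rn$ rather than $r$) is handled correctly.
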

\proof 
The conclusion is obvious when some $H_{k}(z_{k})=\infty$ or when $\cN_{1}(\cH_{1:n},z_{1:n},r)=\infty$. In any case, it is easy to see that any $a_{1:n}\in \calL_{S_{1:n}}^{\otimes}$ satisfying
\begin{align}
\label{equl1covine}
|a_{1:n}-h_{1:n}|_{z_{1:n},1}\leq rn
\end{align} 
for some $h_{1:n}\in \cH_{1:n}$, can be assumed  to satisfy that 
\begin{align}
\label{equasstru}
|a_{k}(z_{k})|\leq H_{k}(z_{k}),&\qquad \mbox{for $k\in \{1,\dots,n\}$}
\end{align}
 (truncate by components if necessary). This implies in particular that there exists a (minimal)  
 \begin{align}
 \{a_{1:n}^{(k)}\}_{k=1}^{\cN_{1}(\cH_{1:n},z_{1:n},r)}\subset \calL_{S_{1:n}}^{\otimes}
 \end{align}
   satisfying \eqref{equasstru} for all $a_{1:n}^{(k)}$ and \eqref{equl1covine} for all $h_{1:n}\in \cH_{1:n}$ and some $a_{1:n}^{(k_{h_{1:n}})}$. The conclusion follows from  Lemma \ref{lemmaslem} and the inequality
   \begin{align}
   u_{1:n}\cdot h_{1:n}(z_{1:n})=&u_{1:n}\cdot(h_{1:n}(z_{1:n})-a_{1:n}^{(k_{h_{1:n}})}(z_{1:n}))+u_{1:n}\cdot a_{1:n}^{k_{h_{1:n}}}(z_{1:n})\notag\\
   \leq& r n+\max_{k}(u_{1:n}\cdot a_{1:n}^{(k)}(z_{1:n})).~\finproof
   \end{align}

Combined with \eqref{equradestconint}, this has the following consequence, fundamental to the application of these ideas to error bounds in learning
\bcor
Under the hypotheses of Corollary \ref{cortheradbou}, the inequality
\begin{align}
&\sum_{k=1}^{n}\Esp{(\hat{h}_{k}(Z_{k}')-\tilde{h}_{k}(Z_{k}'))|Z_{1:n}}\notag\\
\leq &  2(r+||\,|H_{1:n}(Z_{1:n})|_{n,2}||_{\PP,\infty} \big(
\sqrt{2\log({2}/{\delta})}+ \,\Esp{\sqrt{2\log(N_{1}(\cH_{1:n},Z_{1:n},r/n))}}\big)\big)
\label{equradestconintmas}
\end{align}
holds for every $(r,\delta)\in (0,\infty)\times(0,1)$, with probability at least $1-\delta$.\ecor
\begin{proof} 
The estimate \eqref{equbouempradcomcovnum} with $r/n$ in place of $r$ gives, by integration and H\"older's inequality, the bound
\begin{align}
&\Radcave(\cH_{1:n},Z_{1:n})=\Esp{\Radcemp(\cH_{1:n},Z_{1:n})}\notag\\
\leq
&  r+\Esp{|H_{1:n}(Z_{1:n})|_{n,2}\sqrt{2 \log (N_{1}(\cH_{1:n},Z_{1:n},r/n) )}}\notag\\
\leq &  r+||\,|H_{1:n}(Z_{1:n})|_{n,2}||_{\PP,\infty}\Esp{\sqrt{2 \log (N_{1}(\cH_{1:n},Z_{1:n},r/n) )}}.
\label{equproradcomentestalm}
\end{align}
A combination of \eqref{equproradcomentestalm} with \eqref{equradestconint} gives \eqref{equradestconintmas}.\fe
\end{proof}

\brem
\label{remprebouconhul}
It follows from the inequalities in Proposition \ref{proproradcom} and an elementary argument that if, in the above, $\cH_{1:n}$ is a family satisfying
\begin{align}
\cG_{1:n} \subset \cH_{1:n}\subset co(\cG_{1:n})&\qquad\mbox{[resp. $\cG_{1:n} \subset \cH_{1:n}\subset cobal(\cG_{1:n})$]}
\end{align}
(see \eqref{equdefconhul} and \eqref{equdefconpri} for definitions), and if $G_{1:n}$ is defined as in \eqref{equdefseqfuncaph} with $\cH_{1:n}$ replaced by $\cG_{1:n}$ (actually it is easy to see that $G_{1:n}=H_{1:n}$), then all the instances of $\Radcave(\cH_{1:n},Z_{1:n})$ and $H_{1:n}$ in the inequalities above can be replaced by instances of $\Radcave(\cG_{1:n},Z_{1:n})$ and $G_{1:n}$ [resp. by instances of $2\Radcave(\cG_{1:n},Z_{1:n})$ and $G_{1;n}$]. An examination of the proofs above shows that this implies in particular that:
\erems

\bcor
\label{corgenpowconhul}
The generalization bounds \eqref{equboutheupebougen}, \eqref{equdeverrlosradcom}, \eqref{equradestconint} and \eqref{equradestconintmas} remain true if $\hat{h}$ and $\tilde{h}$ in \eqref{empandglosolgen} and \eqref{empandglosolgenone} are computed with $\cH_{1:n}$ replaced by any sequential family $\cF_{1:n}$ with
\begin{align}
\cH_{1:n}\subset \cF_{1:n}\subset co(\cH_{1:n})
\end{align}
(the change when $\cH_{1:n}\subset \cF_{1:n}\subset cobal(\cH_{1:n})$ is evident also from Remark \ref{remprebouconhul}).
\ecor
For an application related (but not identical) to this fact within the case of neural networks see Example \ref{exaneunetgenpow} below.

\subsection{Entropy estimates}
\label{secentest}
Of special importance for the applications is the notion of entropy estimates, which permit to give explicit upper bounds to the generalization power of regression schemes via \eqref{equradestconintmas}:
\begin{dfn}
\label{defentest}
Let $\cH_{1:n}\subset \calL_{S_{1:n}}^{\otimes}$ be a family of sequential functions and let  $Z_{1:n}$ be a random element  of $S_{1:n}^{\otimes}$. An {entropy estimate}  of $\cH_{1:n}$ with respect to $Z_{1:n}$ is a Borel-measurable, nonincreasing function $L_{\cH_{1:n},Z_{1:n}}:[0,\infty)\to [0,\infty]$ such that for all $r>0$,
\begin{align}
\log(\Esp{ N_{1}(\cH_{1:n},Z_{1:n},r)})\leq L_{\cH_{1:n},Z_{1:n}}(r).
\end{align}
If the stronger condition 
\begin{align}
\label{equuppbouunientest}
||\log (N_{1}(\cH_{1:n},Z_{1:n},r))||_{\PP,\infty}\leq L_{\cH_{1:n},Z_{1:n}}(r)
\end{align}
holds, we call $L_{\cH_{1:n},Z_{1:n}}$ a uniform entropy estimate (of $\cH_{1:n}$ with respect to $Z_{1:n}$).

If $\cH\subset \calL(S)$ is given and $Z_{1:n}$ is a random element of $S^{n}$ we define entropy estimates of $\cH$ at $Z_{1:n}$ via the family $diag(\cH)_{1:n}$ in \eqref{equdefdiafam}.\fe
\end{dfn}

The following two are instances of uniform entropy estimates that do not depend on the distribution of $Z_{1:n}$:
\begin{exa}
\label{exasaushe}
For a function $f:\R^{d}\to \R$, define the {\it subgraph of $f$} as the set
\begin{align}
G_{f}^{+}\eqdef \{(x,y)\in \R^{d}\times \R: y\leq f(x)\}.
\end{align}
The {\it VC-dimension $V_{\cF}$ of a family $\cF$ of functions $\R^{d}\to \R$}  {is the supremum of the natural numbers $l$ with the following property: there exists a set $G\subset \R^{d}\times \R$ with $l$ elements such that every subset $G'\subset G$ can be written in the form $G'=G\cap G_{f}^{+}$ for some $f\in \cF$}.  

When $\cF$ is a family of bounded, nonnegative functions $f:\R^{d}\to [0,B]$, one has the following uniform $L^{1}-$entropy estimate (\cite[Lemma 9.2 and Theorem 9.4.]{gyor:kohl:krzy:walk:02}) for $\cF$: for every $r\in (0,B/4]$ and every $z_{1:n}\in (\R^{d})^{n}$,
\begin{align}
\log (N_{1}(\cF,z_{1:n},r))& \leq L(r)\notag\\
& \eqdef \log 3+V_{\cF}(1+\log 2 +\log ({B}/{r}) +\log(1+\log 3 +\log ({B}/{r}))),
\label{equdefunientestfinvc}
\end{align} 
which is clearly $O(\log(1/r))$ as $r\to 0^{+}$ when $V_{\cF}<\infty$\footnote{Note that the restriction  $r\in [0,B/4]$ can be easily bypassed in order to extend $L_{V_{\cF},B}$ to a function $[0,\infty)\to [0,\infty)$: one can for instance  define $L_{V_{\cF}}(r)=0$ if $r>B$, and one can substigute $L_{V_{\cF},B}(r)\mapsto L_{V_{\cF},4B}(r)$, to obtain an estimate valid for $r\in [0,4B/4]=[0,B]$). A similar trick allows us to give uniform entropy estimates via \eqref{equdefunientestfinvc} on (perhaps   nonpositive)  families $\cF$ of functions $f:\R^{d}\to [-B,B]$: the family $\cF'=\cF+B\eqdef \{f+B:f\in \cF\}$ has the same covering numbers as $\cF$, satisfies $V_{\cF}=V_{\cF'}$, and its elements are functions $f:\R^{d}\to [0,2B]$.}, in particular when $\cF=T_{B}{\cG}$ is the family of truncated functions from a  vector space of dimension $d_{\cG}<\infty$, thanks to the bounds 
\begin{align}
V_{T_{B}\cG}\leq V_{\cG}\leq d_{\cG}+1
\end{align}
 (\cite[Theorem  9.5 (and previous paragraph) and Equation (10.23)]{gyor:kohl:krzy:walk:02}).\fe
\end{exa}
The  estimate \eqref{equdefunientestfinvc} is a consequence of the celebrated {\it Sauer-Shelah lemma} (see \cite[Theorem 8.3.16 p.193]{vershynin2018high}). It  is therefore a relationship between the  {\it complexity of $\cF$}, as measured by $V_{\cF}$, and the notion of uniform entropy estimates.

\begin{exa}
\label{exaneunet}
A second example is given by neural networks with one layer and $N$ ``independently powered'' units: it is  shown in  \cite[p.314]{gyor:kohl:krzy:walk:02} that if $\sigma:\R\to [0,1]$ is any cumulative distribution function (for instance a ``sigmoid'' function with asymptotes $y=0$ and $y=1$) and 
\begin{align}
\cG=\cG(\sigma,N,B)
\end{align}
 is the family of functions $g:\R^{d}\to \R$ of the form
\begin{align}
\label{equdefneunet}
g(x)= c_{0}+ \sum_{k=1}^{N}c_{k}\sigma(a_{k}^{T}x+b_{k}) 
\end{align}
with $N\in \N$ fixed,  and with 
\begin{align}
\label{equconindpowuni}
(a_{1:N}, b_{1:N}, c_{0:N})\in  (\R^{d})^{N}\times \R^{N}\times [-B,B]^{N+1}\,\,\mbox{ for some $B>0$,}
\end{align}
 then\footnote{The claim proved in \cite[p.314]{gyor:kohl:krzy:walk:02} is about the family $\cF_{(B,N,\sigma)}\subset \cG_{(B,N,\sigma)}$ of neural networks with ``jointly powered'' units, determined by the restriction $|c_{0:N}|_{N+1,1}\leq B$, but it is clear that the argument works for $ \cG_{(B,N,\sigma)}$.} for every $(z_{1:n},r)\in (\R^{d})^{n}\times (0,B/2)$,
\begin{align}
\log (N_{1}(\cG,z_{1:n},r))& \leq L_{\mathcal{NN},N,B}(r)\notag\\ 
&:=((2d+5)N+1)(1+\log(12)+\log(B/r)+\log(N+1)).\fe\label{equdefunientestfinvcneunet}
\end{align}  
\end{exa}

Notice that both the estimates \eqref{equdefunientestfinvc} and \eqref{equdefunientestfinvcneunet} are of the form $O(\log(1/r))$ as $r\to 0^{+}$. Inspired by the convention (within the theory of  empirical processes) proposed in (\cite[P.38]{pol}), this behavior can be formalized via the following definition:

\begin{dfn}
\label{defeucsubeuc}
Let $\cH_{1:n}\subset \calL_{S_{1:n}}^{\otimes}$ be a family of sequential functions and let  $Z_{1:n}$ be a random element  of $S_{1:n}^{\otimes}$
\begin{enumerate}
\item $(\cH_{1:n},Z_{1:n})$ is called [uniformly] subeuclidean if it admits a [uniform] entropy estimate $L_{\cH_{1:n},Z_{1:n}}$ with
\begin{align}
L_{\cH_{1:n},Z_{1:n}}(r)=O(\log(1/r)), &\qquad \mbox{as $r\to 0$.}
\end{align}
\item $(\cH_{1:n},Z_{1:n})$ is called [uniformly] euclidean of order $\alpha>0$ if it admits a [uniform] entropy estimate $L_{\cH_{1:n},Z_{1:n}}$ with
\begin{align}
L_{\cH_{1:n},Z_{1:n}}(r)=O((1/r)^{\alpha}), &\qquad \mbox{as $r\to 0$.}\fe
\end{align}
\end{enumerate} 
\end{dfn}

To further illustrate the interaction  between entropy estimates and Rademacher complexities, let us finally give the following:

\begin{exa}
\label{exaneunetgenpow}
Consider the family 
\begin{align}
\cF=\cF(\sigma,N,B)
\end{align}
 of ``jointly powered'' (or ``lasso-regularized'') neural networks, which we define as in \eqref{equdefneunet}, \eqref{equconindpowuni} but with the stronger restriction
\begin{align}
|c_{0:N}|_{N+1,1}\leq B
\end{align}
(see \eqref{equdeflpnor}). It is easy to see that, for $\cG(\sigma,N,B)$ as in Example \ref{exaneunet},
\begin{align}
\label{equequwithconhul}
\cF(\sigma,1,B)=\cG(\sigma,1,B)\subset \cF(\sigma,N,B)\subset co(\cG(\sigma,1,B))
\end{align}
 for every (number of units) $N$. If, furthermore, 
 \begin{align}
 \cH=\cH(\cF(\sigma,N,B))
 \end{align}
 is the family of losses associated to  quadratic regression over $\cF(\sigma, N,B)$ with truncated response, namely
 \begin{align}
 \cH:=\{\R^{d}\times \R \ni(x,y)\mapsto h_{f}(x,y):=(T_{B}y-f(x))^{2}: f\in \cF(\sigma,N,B)\},
 \end{align}
 then since 
 \begin{align}
 (u,y)\mapsto (T_{B}y-u)^{2}
 \end{align}
 is $2B-$Lipschitz in $[-B,B]\times \R$, Talagrand's contraction lemma (\cite[Lemma 4.2]{mohri2018foundations}) implies that, for every $(x,y)_{1:n}\in ((\R^{d})\times \R)^{n}$
 \begin{align}
 \Radcemp(\cH,(x,y)_{1:n})\leq& 2B\Radcemp(\{(x,y)\mapsto(y-f(x)):f\in \cF(\sigma,N,B)\}, (x,y)_{1:n})\notag\\
 \leq &2B\Radcemp(\cF(\sigma,N,B),x_{1:n})\notag\\
 = & 2B\Radcemp(\cG(\sigma,1,B),x_{1:n})
 \label{equalmprogenneunet}
 \end{align}
 where in the last equality we used \eqref{equequwithconhul} and Proposition \ref{proproradcom}. Combining in the respective order 
  \eqref{equradestconint}, \eqref{equuppbourad}, \eqref{equalmprogenneunet}, Theorem \ref{theappmasslem} with $diag(\cG(\sigma,1,B))_{1:n}$ in place of $\cH_{1:n}$ and $r=B/\sqrt{n}$, \eqref{equdefunientestfinvcneunet} (w.l.o.g $n>4$), and the  inequality 
  \begin{align}
  \sup_{(g,x,y)\in \cG(\sigma,1,B)\times \R^{d}\times \R}|T_{B}y-g(x)|^{2}\leq 4B^{2}
  \end{align}
 we arrive at the following:
 
 \begin{thm}
 \label{thegenneunet}
 If $\cF(\sigma,N,B)$ is as above,  $(X,Y)_{0:n}$ is an i.i.d. sequence of random variables in $\R^{d}\times \R$, and 
 \begin{align}
 \hat{f}\in \arg\min_{f\in \cF(\sigma,N,B)} \sum_{k=1}^{n}(T_{B}Y_{k}-f(X_{k}))^{2},
 \end{align} 
 then for every $\delta\in (0,1)$
 \begin{align}
 \Esp{(T_{B}Y_{0}-\hat{f}(X_{0}))^{2}|(X,Y)_{1:n}}
 \leq& \min_{f\in \cF(\sigma,B,N)} \Esp{(T_{B}Y_{0}-f(X_{0}))^{2}}\notag\\
 +&\frac{B^{2}}{\sqrt{n}}\left(1+\sqrt{2}\left(8\sqrt{\log(2/\delta)}+\sqrt{(2d+6)(\log(24e\sqrt{n}))}\right)\right)
 \label{equgenpowneunet}
 \end{align}
 with probability at least $1-\delta$.
 \end{thm}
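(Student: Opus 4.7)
The plan is to apply Corollary \ref{cortheradbou} --- specifically the bound \eqref{equradestconint} --- to the loss class $\cH$ with $Z_{k}:=(X_{k},Y_{k})$, and then to control the two resulting summands on its right-hand side via the four-step cascade of inequalities announced in the paragraph immediately preceding the theorem. Because $(X,Y)_{0:n}$ is i.i.d., the conditional expectation $\Esp{h_{\hat{f}}(Z_{k}')|Z_{1:n}}$ coincides for every $k$ with $\Esp{(T_{B}Y_{0}-\hat{f}(X_{0}))^{2}|(X,Y)_{1:n}}$, and $\Esp{h_{\tilde{f}}(Z_{k}')}=\min_{f\in \cF(\sigma,N,B)}\Esp{(T_{B}Y_{0}-f(X_{0}))^{2}}$ for $\tilde{f}$ a minimizer of the true truncated risk; dividing \eqref{equradestconint} by $n$ therefore produces, on the left, the excess-risk expression on the left of \eqref{equgenpowneunet}, and it remains only to bound the two terms on the right.

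For the envelope contribution, the constraint $|c_{0:N}|_{N+1,1}\leq B$ together with $\sigma\in [0,1]$ forces $|f(x)|\leq B$ for every $f\in \cF(\sigma,N,B)$, hence $|T_{B}y-f(x)|^{2}\leq 4B^{2}$ uniformly in $(x,y,f)$. This yields $H_{k}\leq 4B^{2}$ pointwise and $||\,|H_{1:n}(Z_{1:n})|_{n,2}\,||_{\PP,\infty}\leq 4B^{2}\sqrt{n}$. Combining with the factors $2/n$ and $\sqrt{2\log(2/\delta)}$ coming from \eqref{equradestconint} accounts for the summand proportional to $\sqrt{\log(2/\delta)}$ inside the bracket of \eqref{equgenpowneunet}.

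For the Rademacher contribution, I would execute the cascade in the exact order stated just before Theorem \ref{thegenneunet}: (i) pass from $\Radcave(\cH,(X,Y)_{1:n})$ to a pointwise-uniform empirical Rademacher complexity via \eqref{equuppbourad}; (ii) exploit the inclusion $\cF(\sigma,N,B)\subset co(\cG(\sigma,1,B))$ from \eqref{equequwithconhul} via Proposition \ref{proproradcom} (as formalized in Corollary \ref{corgenpowconhul}) and apply Talagrand's contraction to obtain $\Radcemp(\cH,(x,y)_{1:n})\leq 2B\,\Radcemp(\cG(\sigma,1,B),x_{1:n})$, as in \eqref{equalmprogenneunet}; (iii) apply Theorem \ref{theappmasslem} to $diag(\cG(\sigma,1,B))_{1:n}$ at the scale $r=B/\sqrt{n}$, using the envelope $G_{k}\leq 2B$ (valid because $|c_{0}|,|c_{1}|\leq B$ in $\cG(\sigma,1,B)$); and (iv) specialize the entropy estimate \eqref{equdefunientestfinvcneunet} to $N=1$, noting that $\log(B/r)=\tfrac{1}{2}\log n$ gives $\log N_{1}(\cG(\sigma,1,B),x_{1:n},B/\sqrt{n})\leq (2d+6)\log(24e\sqrt{n})$. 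Collecting the resulting factors, extracting $B^{2}/\sqrt{n}$ from outside the bracket, and combining with the envelope contribution yields \eqref{equgenpowneunet}.

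The argument is essentially routine; the main technical obstacle is constant bookkeeping. One must track the multiplicative factors $2$ coming from Corollary \ref{cortheradbou}, $\sqrt{2}$ from Massart's lemma, $2B$ from the Lipschitz constant of $u\mapsto (T_{B}y-u)^{2}$ on $[-B,B]$, and $4B^{2}$ and $2B$ from the two envelopes, and arrange them in the precise order above so that (i) the $B^{2}/\sqrt{n}$ factor cleanly materializes outside the bracket in \eqref{equgenpowneunet}, and (ii) the $\log(24e\sqrt{n})$ inside the square root emerges from the choice $r=B/\sqrt{n}$ at the penultimate step.
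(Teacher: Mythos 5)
Your proposal follows exactly the paper's own argument: the proof of Theorem \ref{thegenneunet} given in Example \ref{exaneunetgenpow} is precisely the chain \eqref{equradestconint}, then \eqref{equuppbourad}, then \eqref{equalmprogenneunet} (convex-hull reduction plus Talagrand's contraction), then Theorem \ref{theappmasslem} applied to $diag(\cG(\sigma,1,B))_{1:n}$ with $r=B/\sqrt{n}$, then the entropy estimate \eqref{equdefunientestfinvcneunet} with $N=1$, together with the envelope bound $4B^{2}$ --- the same steps in the same order as yours. The identification of the conditional excess risk via the i.i.d.\ structure and the constant bookkeeping you describe are likewise how the paper assembles \eqref{equgenpowneunet}.
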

 
 We will see in the next section that this bound is suboptimal for large samples; in addition, the constants in \eqref{equgenpowneunet} can be improved  applying the consequences of Remark \ref{remhoa} (essentially replace the $8$ by  a $4$). In any case, since the right-hand side of \eqref{equgenpowneunet}  does not depend on the number of units $N$, Theorem \ref{thegenneunet} is consistent\footnote{For the case of one-layer neural networks, but the multi-layer case is not difficult to analyze departing from the arguments in \cite{gyor:kohl:krzy:walk:02}.} with the popular  observation that {lasso regularization preserves the generalization power of neural networks as the number of units increases\footnote{See \cite{kakshatew} for a discussion of the role of regularization in the context of linear regression. For a discussion specialized in neural networks, see \cite[Section 2.6]{wol}.}}.
 
 \end{exa}

\section{VC-Confidence intervals for truncated least squares  
}
\label{secgenvc}

In this section we present some error estimates in probability for least squares regression schemes that rely on versions of \eqref{equdevsupind} adjusted to the case in which $Z_{1:n}=(X,Y)_{1:n}$ is an independent sequence in $\stax\times \R$ and the sequential space $\cH_{1:n}$ is the space of pointwise  deviations  of the losses associated to a least square regression scheme on a space of hypotheses $\cG\eqdef \{g:\stax\to \R\}$.

\subsection{Setting and additional notation}
\label{secsetaddnot}
The setting is the following: for some $B>0$, consider the truncation operator
\begin{align}
\label{equdeftruope}
T_{B}y\eqdef \min\{\max\{y,-B\},B\},
\end{align} 
and for $(X,Y)_{1:n}$ as before, let $\phi_{B,k}:\stax\to [-B,B]$ be such that 
 \begin{align}
 \label{equchaphik}
 \phi_{B,k}(X_{k})=\Esp{T_{B}Y_{k}|X_{k}},&\qquad \PP-a.s. 
 \end{align}
 
  Let $\cG\subset \calL(S)$, assume that there exist 
\begin{align}
\hat{g}\in \arg\min_{g\in \cG} \sum_{k=1}^{n}|g(X_{k})-T_{B}Y_{k}|^{2},&\qquad\tilde{g}\in \arg\min_{g\in \cG} \sum_{k=1}^{n}\Esp{|g(X_{k})-T_{B}Y_{k}|^{2}}
\label{equdefgapp}
\end{align}
and define
\begin{align}
\hat{\phi}_{B}\eqdef T_{B}\hat{g}
\end{align}

We define, for every $(g,k)\in \calL(S)\times \{1,\dots,n\}$, 
\begin{align}
\label{thefuntalinapp}
h_{g,k}(x,y)\eqdef |g(x)-T_{B}y|^{2}-|\phi_{B,k}(x)-T_{B}y|^{2},
\end{align}
which in particular satisfies, by \eqref{equchaphik} and the Pythagorean theorem, the equation
\begin{align}
\label{equconpyt}
\Esp{h_{g,k}(X_{k},T_{B}Y_{k})}=\Esp{|g(X_{k})-\phi_{B,k}(X_{k})|^{2}}
\end{align}

\subsection{The conditional deviation of the $L^{2}-$error}
In this section, we will describe some estimates of the conditional probability 
\begin{align}
\PPro{\Esp{\sum_{k=1}^{n}  (h_{\hat{\phi}_{B},k}(Z_{k}')-h_{\tilde{g},k}(Z_{k}')) |Z_{1:n}}
>n\epsilon}
\end{align}
where $Z_{1:n}'=(X',Y')_{1:n}$ is an independent copy of $Z_{1:n}$. Notice that, if we (naturally) denote by $y:\stax\times \R\to \R$ the projection $y(x_{0},y_{0})=y_{0}$, then 
this is the same as 
\begin{align}
\PPro{
\frac{1}{n}\sum_{k=1}^{n} ||\hat{\phi}_{B}-T_{B}y||_{k}^{2}-\inf_{g\in \cG}\frac{1}{n}\sum_{k=1}^{n}||g-T_{B}y||_{k}^{2}
>\epsilon}\notag\\
= \PPro{
\frac{1}{n}\sum_{k=1}^{n} ||\hat{\phi}_{B}-\phi_{B,k}||_{k}^{2}-\inf_{g\in \cG}\frac{1}{n}\sum_{k=1}^{n}||g-\phi_{B,k}||_{k}^{2}
>\epsilon}
\label{equaltdesdev}
\end{align}
where $||\,\cdot\, ||_{k}$ at the left [resp. right]--hand  side of \eqref{equaltdesdev} is the $L^{2}$ norm on $\calL(\stax\times \R)$ [resp. $\calL(\stax)$] associated to the law of $(X_{k},Y_{k})$ [resp. $X_{k}$], and where we used \eqref{equconpyt} for claiming the equality.

 The estimates will depend on the functions
\begin{align}
A:&S^{n}\times(1,\infty)\times (1,\infty)\times (0,\infty) \to (0,\infty]\notag\\
a:&(1,\infty)\times (1,\infty)\times (0,\infty) \to (0,\infty]\notag\\
\epsilon_{n}:& (1,\infty)\times (1,\infty) \to (0,\infty)\notag\\
b:&(1,\infty)\times (1,\infty) \to (0,\infty)
 \end{align}
given by
\begin{align}
A(x_{1:n},c,\lambda,\epsilon)\eqdef &2(c+1)(2c+3){\cN_{1}({T_{B}\cG}, x_{1:n},\frac{1}{2^{5}}\frac{1}{B}\frac{1}{\lambda(c-1)+1}(1-\frac{1}{c})\epsilon)},\label{equspearan}\\
&\notag\\
a(c,\lambda,\epsilon)\eqdef &
\Esp{A(X_{1:n},c,\lambda,\epsilon)}
,\label{equspea}\\
&\notag\\
\epsilon_{n}(c,\lambda)\eqdef &8B^{2}({-(\lambda-1)+\sqrt{(\lambda-1)^{2}+c(c+1)\lambda^{2}/n}})\, \label{equspee} \\
&\notag\\
b(c,\lambda)\eqdef &
\frac{1}{2^{5}B^{2}}\frac{1}{(\frac{1}{3}(1-\frac{1}{c})(1-\frac{1}{\lambda})+(2\lambda-1))^{2}}(1-\frac{1}{c})^{{3}}(1-\frac{1}{\lambda}),
\label{equspeb}
\end{align}
where $T_{B}\cG\eqdef \{T_{B}g:g\in \cG\}$ is the family of $B-$truncated functions from $\cG$ (see \eqref{equdeftruope}).

\begin{thm}
\label{theratconlsrpro}
Under the previous setting, for every $(c,\lambda,\delta)\in(1,\infty)\times(1,\infty) \times (0,1)$, the inequality
\begin{align}
\Esp{
\sum_{k=1}^{n}  (h_{\hat{\phi},k}(Z_{k}')-h_{\tilde{g},k}(Z_{k}'))\left |Z_{1:n}\right.}\leq &
\sum_{k=1}^{n} (\lambda h_{\tilde{g},k}(Z_{k})-\Esp{h_{\tilde{g},k}(Z_{k})}) \notag\\
+&n \epsilon_{n}(c,\lambda)\lor \frac{1}{b(c,\lambda)}(\log a(c,\lambda,\epsilon_{n}(c,\lambda))+\log(1/\delta)))
\label{equbuoproreg}
\end{align}
holds with probability at least $1-\delta$.
\end{thm}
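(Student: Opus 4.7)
My plan is to follow the template of Chapter 11 of \cite{gyor:kohl:krzy:walk:02}, adapted to the non-stationary independent setting developed earlier in the paper and in \cite{bargob19}. The argument splits into an oracle reduction, a uniform relative deviation bound, and an inversion step.

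\textbf{Step 1 (Oracle reduction).} Since $T_{B}$ is the projection onto $[-B,B]$ and $T_{B}Y_{k}\in[-B,B]$, the pointwise inequality $|T_{B}g(x)-T_{B}y|^{2}\le |g(x)-T_{B}y|^{2}$ holds for every $(x,y,g)$; combined with the defining property of $\hat{g}$ in \eqref{equdefgapp}, this yields the deterministic chain
\begin{align*}
\sum_{k=1}^{n} h_{\hat{\phi}_{B},k}(Z_{k}) \le \sum_{k=1}^{n} h_{\hat{g},k}(Z_{k}) \le \sum_{k=1}^{n} h_{\tilde{g},k}(Z_{k}).
\end{align*}
Since $\lambda > 0$, this reduces \eqref{equbuoproreg} to a high-probability upper bound on
\begin{align*}
\sum_{k=1}^{n}\Esp{h_{\hat{\phi}_{B},k}(Z_{k}')|Z_{1:n}}-\lambda\sum_{k=1}^{n} h_{\hat{\phi}_{B},k}(Z_{k}),
\end{align*}
which in turn I will bound uniformly over $\hat{\phi}_{B}\in T_{B}\cG$.

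\textbf{Step 2 (Uniform relative deviation).} The heart of the argument is a one-sided deviation inequality of the form: for every $\epsilon\ge\epsilon_{n}(c,\lambda)$,
\begin{align*}
\PPro{\sup_{g\in\cG}\Big(\sum_{k=1}^{n}\Esp{h_{T_{B}g,k}(Z_{k})}-\lambda\sum_{k=1}^{n}h_{T_{B}g,k}(Z_{k})\Big)>n\epsilon}\le a(c,\lambda,\epsilon)\exp(-n\,b(c,\lambda)\,\epsilon).
\end{align*}
I would establish this via the standard three-part argument: symmetrization against a ghost sample, conditioning on a minimal empirical $L^{1}$-covering of $T_{B}\cG$ at $X_{1:n}$ of radius $\tfrac{1}{2^{5}B(\lambda(c-1)+1)}(1-1/c)\epsilon$ (which fixes the argument of $\cN_{1}$ in \eqref{equspearan}), and a Bernstein-type concentration bound applied to each cover element. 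Two quantitative facts drive the constants: $|h_{T_{B}g,k}|\le 4B^{2}$ uniformly, and a direct expansion using \eqref{equchaphik} and \eqref{equconpyt} gives $\mathrm{Var}(h_{T_{B}g,k}(Z_{k}))\le c_{0}B^{2}\Esp{h_{T_{B}g,k}(Z_{k})}$ for an explicit $c_{0}=O(1)$. This variance-to-mean ratio is precisely what allows $\lambda$ to enter multiplicatively. The free parameter $c>1$ balances the quadratic and linear regimes of Bernstein's inequality and, together with $\lambda$, generates \eqref{equspee} and \eqref{equspeb}; the prefactor $2(c+1)(2c+3)$ in \eqref{equspearan} absorbs the symmetrization, union-bound and discretization losses.

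\textbf{Step 3 (Inversion and assembly).} On $[\epsilon_{n}(c,\lambda),\infty)$ the RHS of the deviation bound above is strictly decreasing in $\epsilon$ at exponential rate $nb(c,\lambda)$; solving for $\delta$ while respecting the floor at $\epsilon_{n}(c,\lambda)$ gives the threshold
\begin{align*}
\epsilon^{*}\eqdef \epsilon_{n}(c,\lambda)\lor \tfrac{1}{nb(c,\lambda)}\bigl(\log a(c,\lambda,\epsilon_{n}(c,\lambda))+\log(1/\delta)\bigr).
\end{align*}
Applying Step 2 with $\epsilon=\epsilon^{*}$, specializing the supremum to $g=\hat{g}$ (so that $T_{B}\hat{g}=\hat{\phi}_{B}$), using Step 1 to replace $\sum h_{\hat{\phi}_{B},k}(Z_{k})$ by $\sum h_{\tilde{g},k}(Z_{k})$, and transposing $\sum\Esp{h_{\tilde{g},k}(Z_{k})}$ to the other side produces \eqref{equbuoproreg} verbatim. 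The main obstacle is entirely contained in Step 2: matching the explicit expressions \eqref{equspearan}--\eqref{equspeb} requires careful bookkeeping of the union-bound cost, the covering radius, and the Bernstein parameters, but it is otherwise a standard (if tedious) computation once the symmetrization and covering scaffolding is set up correctly.
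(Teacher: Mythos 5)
Your proposal is correct and follows essentially the same route as the paper: the decomposition in your Steps 1 and 3 is exactly the chain \eqref{inidesprolardev} followed by the inversion of the tail bound at the threshold $\epsilon_{n}(c,\lambda)\lor\frac{1}{nb}(\log a_{n}+\log(1/\delta))$. The only difference is that the paper does not re-derive the uniform relative deviation inequality \eqref{lardevinesym} but imports it wholesale from \cite[Section 3.2]{bargob19}, whereas you sketch its proof (symmetrization, $L^{1}$-covering at the radius appearing in \eqref{equspearan}, Bernstein with the variance-to-mean domination); your sketch matches the mechanism of the cited result.
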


\begin{proof}
Let $\lambda>1$ be given and write
\begin{align}
&\sum_{k=1}^{n}\Esp{h_{\hat{\phi},k}(Z_{k}')-h_{\tilde{g},k}(Z_{k}')\left |Z_{1:n}\right.}\notag\\
=&
\sum_{k=1}^{n}(\Esp{h_{\hat{\phi},k}(Z_{k}')|Z_{1:n}}-\lambda h_{\hat{\phi},k}(Z_{k}))+ 
\sum_{k=1}^{n} (\lambda h_{\hat{\phi},k}(Z_{k})-\Esp{h_{\tilde{g},k}(Z_{k})})\notag\\
\leq&
\sum_{k=1}^{n}(\Esp{h_{\hat{\phi},k}(Z_{k}')|Z_{1:n}}-\lambda h_{\hat{\phi},k}(Z_{k}))+ \sum_{k=1}^{n} (\lambda h_{\tilde{g},k}(Z_{k})-\Esp{h_{\tilde{g},k}(Z_{k})})\notag\\
\leq &
\sup_{g\in \cG}\left\{
\sum_{k=1}^{n}(\Esp{h_{{T_{B}g},k}(Z_{k})}-\lambda h_{T_{B}g,k}(Z_{k}))\right\}+ 
\sum_{k=1}^{n} (\lambda h_{\tilde{g},k}(Z_{k})-\Esp{h_{\tilde{g},k}(Z_{k})}).
\label{inidesprolardev}
\end{align}
As shown in \cite[Section 3.2, P. 67]{bargob19} (after an elementary homogenization, see for instance  \cite[Section 3.2, P. 69]{bargob19}), the functions \eqref{equspea}, \eqref{equspee} and \eqref{equspeb} are such that, for every $(c,\lambda, \epsilon)\in(1,\infty)\times (1,\infty)\times (0,\infty)$,
\begin{align}
\PPro{\sup_{g\in \cG}\left\{\sum_{k=1}^{n}(\Esp{h_{{T_{B}g},k}(Z_{k})}-\lambda h_{T_{B}g,k}(Z_{k}))\right\}\geq n\epsilon} & \notag\\ 
\leq \un_{\{\epsilon< \epsilon_{n}(c,\lambda)\}}+&a(c,\lambda,\epsilon)\exp(-b(c,\lambda) n \epsilon )\un_{\{\epsilon\geq \epsilon_{n}(c,\lambda)\}}\notag\\
\leq\un_{\{\epsilon< \epsilon_{n}(c,\lambda)\}} +& 
a_{n}(c,\lambda)\exp(-b(c,\lambda) n \epsilon )\un_{\{\epsilon\geq \epsilon_{n}(c,\lambda)\}}\notag\\
\label{lardevinesym}
\end{align}
where
\begin{align}
\label{equdefan}
a_{n}(c,\lambda)\eqdef a(c,\lambda,\epsilon_{n}(c,\lambda)).
\end{align}
From here on, we keep fixed $(c,\lambda)$ and therefore we omit for simplicity the arguments of the functions of these parameters (so for instance $a_{n}\equiv a_{n}(c,\lambda)$).

From the equivalence
\begin{align}
a_{n}\exp(-bn\epsilon)\leq \delta \iff \frac{1}{bn}(\log a_{n} +\log (1/\delta))\leq \epsilon,
\end{align}
and from \eqref{lardevinesym}, it follows that for any $\delta\in (0,1)$ and for
\begin{align}
\label{equdefepsdel}
\epsilon_{n}(\delta)\eqdef \frac{1}{bn}(\log a_{n} +\log (1/\delta))\lor \epsilon_{n},
\end{align}

\begin{align}
\sup_{g\in \cG}\left\{\sum_{k=1}^{n}(\Esp{h_{{T_{B}g},k}(Z_{k})}-\lambda h_{T_{B}g,k}(Z_{k}))\right\}
\leq n \epsilon_{n}(\delta) 
 \end{align}
with probability at least $1-\delta$: this and \eqref{inidesprolardev} imply the claim of the theorem.\fe
\end{proof}

\brem
\label{remdepbouvc}
In the setting of Section \ref{secremdep},  using (for simplicity) the notation
\begin{align}
m_{n,\delta}=
\log_{r}(n/\delta)
,& \qquad n_{m,\delta}:=
n/m(n,\delta)
\end{align}
(we assume also for simplicity that $m(n,\delta)\in \mathbb{Z}$ and divides $n$), and extending the definition \eqref{equspearan}  of $A$ to arbitrary finite sequences $(z_{j})_{j\in J}$ by means of the empirical measure $
\sum \delta_{z_{j}}/|J|$, we obtain this time, under \eqref{equexpmixrat}, the upper bound
\begin{align}
\un_{\{\epsilon< m_{n,\delta}\epsilon_{n_{m,\delta} }(c,\lambda)\}} +& 
(m_{n,\delta}a_{n_{m,\delta} }^{*}(c,\lambda)\exp(-b(c,\lambda) n_{m,\delta} \epsilon )+\delta)\un_{\{\epsilon\geq m_{n,\delta} \epsilon_{n_{m,\delta}}(c,\lambda)\}}
\end{align}
for the left-hand side of \eqref{lardevinesym}, where this time
\begin{align}
a_{n_{m,\delta}}^{*}=\max_{1\leq k\leq m_{n,\delta} }\Esp{A(Z_{J_{k}}^{*},c,\lambda,\epsilon_{n_{m,\delta} })}
\end{align}
with $J_{k}:=\{k+lm_{n,\delta}\}_{l}\cap\{1,\dots,n\}$ ($1\leq k\leq m_{n,\delta}$) and $Z_{1:n}^{*}$ an independent sequence with the same marginals as $Z_{1:n}$. The conclusion (ignoring divisibility issues) is that of Theorem \ref{theratconlsrpro} with the second line of \eqref{equbuoproreg} replaced by
\begin{align}
\label{equdefepsdelmod}
 (n_{m,\delta/2} m_{n,\delta/2}\epsilon_{n_{m,\delta/2}})\lor\frac{1}{b(c,\lambda)}(\log m_{n,\delta/2}+\log a_{n_{m,\delta/2}}^* +\log (2/\delta)
).
\end{align}
\erems

\subsection{Remarks on  the bound in Theorem \ref{theratconlsrpro}
}
\label{secrembou}
Let us give  explicit (and useful) estimates of the second term at the right hand side  of \eqref{equbuoproreg}. 
We begin by pointing out the estimate
\begin{align}
\epsilon_{n}(c,\lambda)\leq 8B^{2}\lambda\left(\left(\frac{c(c+1)}{2n}\frac{\lambda}{\lambda-1}\right)\wedge \sqrt{\frac{c(c+1)}{n}}\right)
\label{equupeesteps}
\end{align}
which follows from elementary majorizations after multiplying and dividing the sum in \eqref{equspee} by its conjugate.

Notice now that 
\begin{align}
& \frac{1}{2^{5}}\frac{1}{B}\frac{1}{\lambda(c-1)+1}(1-\frac{1}{c})\epsilon_{n}(c,\lambda) & \notag\\
=&\frac{B}{2^{2}}\frac{(c^{2}-1)\lambda^{2}}{(\lambda(c-1)+1)((((\lambda-1)n)^{2}+c(c+1)\lambda^{2}n)^{1/2}+(\lambda-1)n)} & \notag\\
\geq & \frac{B}{2^{2}(2^{1/2}+1)}\left(1-\frac{1}{c}\right)\frac{1}{n} &
\label{equexpazersim}
\end{align}
(this minorization may be suboptimal: see the discussion leading  to \eqref{equsecsimuppbouproref} below). Simple manipulations show also that, if
\begin{align}
p(c)\eqdef \frac{c}{c-1}, & &\\
q_{1}(\lambda)\eqdef \frac{1}{9}\left(1-\frac{1}{\lambda}\right), &\qquad q_{2}(\lambda)\eqdef \frac{2}{3}(2\lambda-1), & q_{3}(\lambda)\eqdef (2\lambda-1)^{2}\frac{\lambda}{\lambda-1}
\end{align}
then
\begin{align}
\frac{1}{b(c,\lambda)}=&
2^{5} B^{2}(q_{1}(\lambda)p(c)+q_{2}(\lambda)p^{2}(c)+q_{3}(\lambda) p^3(c)).
\label{equexponeovebsim}
\end{align}

Together, \eqref{equupeesteps}, \eqref{equexpazersim} and \eqref{equexponeovebsim} permit to see that, if 
\begin{align}
q_{0}(\lambda)\eqdef \frac{1}{8}\frac{\lambda^{2}}{(\lambda-1)}, 
\end{align}
then the second term at the right-hand side of \eqref{equbuoproreg} is upper bounded by
\begin{align}
&2^{5}\frac{B^{2}}{n}\left(\frac{}{}(\frac{}{}q_{0}(\lambda) c(c+1))
\right.\lor\left(\left(\sum_{k=1}^{3}q_{k}(\lambda)(p(c))^{k}\right)\left(\log\left(\frac{2(c+1)(2c+3)}{\delta}\right)\right.\right.\notag\\
+&\left.\left. \left.\log\left(\Esp{N_{1}\left({T_{B}\cG}, Z_{1:n},\frac{B}{2^{2}(2^{1/2}+1)}\left(1-\frac{1}{c}\right)\frac{1}{n}\right)}\right) \right)\right)\right).\notag\\
\label{equfirsimuppboupro}
\end{align}
A numerical search shows that the function $(1,\infty)\times (1,\infty)\to \R$ given by
\begin{align}
(c,\lambda)\mapsto V(c,\lambda)=2^{5}\left(q_{0}(\lambda) c(c+1)
\lor\left(\left(\sum_{k=1}^{3}q_{k}(\lambda)(p(c))^{k}\right)\left(\log\left({2(c+1)(2c+3)}\right)\right)\right)\right)
\end{align}
attains a local minimum at some $(\lambda_{0},c_{0})$ with 
\begin{align}
\label{equdefminvcbou} 
1.29<\lambda_{0}<1.3, &\qquad 11.46<c_{0}<11.47
\end{align}
and with 
\begin{align}
\label{equdefcoeminvcbou}
3291<V(c_{0},\lambda_{0})< 3292,
\end{align}
thus giving the bound
\begin{align}
\frac{1}{n}\sum_{k=1}^{n} (\lambda_{0} h_{\tilde{g},k}(Z_{k})-\Esp{h_{\tilde{g},k}(Z_{k})})\notag\\
+3292\frac{B^{2}}{n}\left(1+\log\left(\frac{1}{\delta}\right)+\log\left(\Esp{N_{1}\left({T_{B}\cG}, X_{1:n},0.094\frac{B}{n}\right)}\right)\right)
\label{equfirsimuppbouproopt} 
\end{align}
for the left-hand side of \eqref{equbuoproreg}.

{\it Behavior as $\lambda\to 1$.} In applications, we may be interested in upper bounding the left hand side of \eqref{equbuoproreg} for $\lambda$ close to one\footnote{See for instance \eqref{equbuoproregsim2} below, where it is necessary to take $\lambda$ close to one in order to guarantee an upper bound close to the approximation error (for $n$ large enough).}. To discuss the behavior of our bounds for this case, notice that
\begin{align}
\sum_{k=1}^{3} q_{k}(\lambda) p^{k}(c)= p^{3}(c)\leq \left(\sum_{k=1}^{3} q_{k}(\lambda)\right)p^{3}(c)\defeq \frac{C(\lambda)}{\lambda-1}p^{3}(c) 
\end{align} 
where 
\begin{align}
C(\lambda)\eqdef (\lambda-1)\sum_{k=1}^{3} q_{k}(\lambda)\to_{\lambda\to 1^{+}} 1
\end{align}
This, together with the inequality $q_{0}(\lambda)\leq q_{3}(\lambda)$, allows us to upper bound \eqref{equfirsimuppboupro} by 
\beql{equsecsimuppboupro}
&2^{2}\frac{1}{\lambda-1}\frac{B^{2}}{n}\left(\lambda^{2}c(c+1)\lor\left(2^{3}C(\lambda)\left(\frac{c}{c-1}\right)^{3}\left(\log\left(\frac{2(c+1)(2c+3)}{\delta}\right)\right.\right.\right.\\&\qqq\left.\left.\left.
+\log\left(\Esp{N_{1}\left({T_{B}\cG}, Z_{1:n},\frac{B}{12}\left(1-\frac{1}{c}\right)\frac{1}{n}\right)}\right)\right)\right)\right). 
\eeql
with $C(\lambda)\to_{\lambda\to 1^{+}} 1$. The expression \eqref{equsecsimuppboupro} can be optimized in several directions according to necessity. To illustrate concretely, the restriction
\begin{align}
\label{equfirreslam}
1<\lambda\leq 13/12
\end{align}
implies the bound
\begin{align}
\label{equfirbouclam}
C(\lambda)<2.
\end{align}
Using \eqref{equfirreslam} together with \eqref{equfirbouclam} and with the choice $c=2$, we can bound the second term at the right-hand side of \eqref{equbuoproreg} by
\begin{align}
2^{6}\frac{1}{\lambda-1}\frac{B^{2}}{n}\left(\log({42})+\log\left(\frac{1}{\delta}\right)
+\log\left(\Esp{N_{1}\left({T_{B}\cG}, Z_{1:n},\frac{B}{24\,n}\right)}\right)\right).
\label{equsecsimuppbouprosecbou}
\end{align}
provided \eqref{equfirreslam}.

\medskip

{\it A refined minorization}. Let us finally discuss a case in which a more refined use of the estimate \eqref{equbuoproreg} gives an interesting consequence. The setting is the same as before but this time we will make more explicit the dependence on $n$, thus denoting $B\equiv B_{n}$, $Z_{1:n}\equiv Z_{1:n}^{(n)}$, etc.

If we have in this setting a sequence $(F_{n}(\cdot))_{n}$ of {entropy estimates for $(T_{B_{n}}\cG_{n},Z_{1:n}^{(n)})_{n}$} (Definition \ref{defentest}) and if the following ``continuity'' property holds for $(F_{n})_{n}$
\begin{align}
\frac{r_{n}}{r_{n'}}\to_{n} 1
\mbox{\,\,\,\,\, \upshape implies\,\,\,\,\,\,}
 \frac{F_{n}(r_{n})}{F_{n}(r_{n}')}\to_{n} 1,
 \label{equhypentest}
\end{align}
(see for instance \eqref{equdefunientestfinvc} and \eqref{equdefunientestfinvcneunet}), then from the first equality in \eqref{equexpazersim} follow the bounds
\begin{align}
& \frac{1}{2^{5}}\frac{1}{B}\frac{1}{\lambda(c-1)+1}(1-\frac{1}{c})\epsilon_{n}(c,\lambda) & \notag\\
\geq &\frac{B}{2^{2}}\frac{(c^{2}-1)\lambda^{2}}{\lambda^{2} c(c+1)((((\lambda-1)n)^{2}+n)^{1/2}+(\lambda-1)n)} & \notag\\
= &\frac{B}{2^{2}}\left(1-\frac{1}{c}\right)\frac{1}{ ((((\lambda-1)n)^{2}+n)^{1/2}+(\lambda-1)n)} & \notag
\end{align}
and an argument similar the one leading to \eqref{equsecsimuppboupro} shows that if $B_{n}\geq 1$, if $(\lambda_{n})_{n}$ is of the form
\begin{align}
\label{equratlamori}
1<\lambda_{n}=1+B_{n}^{2}n^{-1/2},
\end{align}
and if $(c_{n})_{n}$ is bounded away from one, then the second term at the right--hand side of \eqref{equbuoproreg} is upper bounded by
\beql{equsecsimuppbouproref}
&{C_{{n}}}\frac{1}{n^{1/2}}\left({2^{2}}c_{n}(c_{n}+1)+2^{5}\left(\frac{c_{n}}{c_{n}-1}\right)^{3}\left(\log\left(\frac{2(c_{n}+1)(2c_{n}+3)}{\delta}\right) \right.\right. \\&\left.\left. 
+F_{n}\left(\frac{1}{4B_{n}}\left(1-\frac{1}{c_{n}}\right)\left(\frac{1}{n}\right)^{1/2}\right)\right)\right)\eeql 
with $C_{n}\to_{n} 1$.

The  advantage of \eqref{equsecsimuppbouproref} over \eqref{equsecsimuppboupro} relies on the  lower rate of convergence to zero of the radii involved in the covering, which gives room for meaningful estimates under a higher complexity of $(T_{B_{n}}\cG_{n}, Z_{1:n}^{(n)})$. 

Consider for instance the subeuclidean case of order $\alpha \in (0,1)$ (Definition \ref{defeucsubeuc}) in which, assuming that $B_{n}n^{1/2}\to_{n} \infty$,
\begin{align}
\label{equentestspesubsub}
F_{n}(O((B_{n}^{2}n)^{-1/2}))=o(B_{n}^{\alpha}n^{\alpha/2})
\end{align}
as $n\to \infty$. In this case, under the additional assumption that
\begin{align}
\label{equassgrocn}
c_{n}=o(n^{1/4})
\end{align} 
we get that the second term at the right-hand side of \eqref{equbuoproreg} is of the form 
\begin{align}
\label{equbno1}
o(1)+ o(B_{n}^{\alpha}n^{(\alpha-1)/2})
\end{align}
 as $n\to \infty$ for $\delta$ fixed; in particular \eqref{equbno1}, and therefore the second term at the right-hand side of \eqref{equbuoproreg}, goes to zero if some choice $B_{n}=o({n^{\frac{1}{2}(\frac{1}{\alpha}-1)}})$ makes all this possible\footnote{Notice that $B_{n}$ is formally involved on the definition of $F_{n}(\cdot)$.}: this is related to convergence in probability of the regression function in the $L^{2}$ norm (consider the classical i.i.d. case where the  hypotheses are as in \eqref{equdefdiafam}; see also \cite[Theorem 3.14 and Remarks 3.3, 3.5, 3.19]{bargob19}).
 
\medskip

\subsection{Confidence intervals for the $L^{2}$-error with bounded hypotheses}
In the case in which $\cG$ is a set of functions already bounded by $B$, the argument behind the proof of Theorem \ref{theratconlsrpro} can be applied also to the large deviation
\begin{align}
\frac{1}{n}\sum_{k=1}^{n} (\lambda h_{\tilde{g},k}(Z_{k})-\Esp{h_{\tilde{g},k}(Z_{k})}),
\end{align}
whose behavior in probability we did not analyze before. Concretely, we have the following result (we use an alternative description, akin to \eqref{equaltdesdev}, in the statement).

\begin{thm}
\label{corbouproiidreg}
Under the setting in Section \ref{secsetaddnot},  and assuming additionally that $\cG$ is uniformly bounded by $B$ ($|g(x)|\leq B$ for all $(g,x)\in \cG\times \stax$), the inequality
\begin{align}
\frac{1}{n}\sum_{k=1}^{n}||\hat{g}-\phi_{k}||_{k}^{2}\leq& (6\,\lambda-5) \inf_{g\in \cG}\frac{1}{n}\sum_{k=1}^{n}||g-
\phi_{k}||_{k}^{2}
 \notag\\
+ &6\left(   \epsilon_{n}(c,\lambda)\lor (\frac{1}{nb(c,\lambda)}(\log a(c,\lambda,\epsilon_{n}(c,\lambda))+\log(2/\delta)))\right)
\label{equbuoproregsim2}
\end{align}
where $||\cdot||_{k}$ denotes the $L^{2}$ norm in $\calL(S)$ associated to the law of $X_{k}$, holds for every $(c,\lambda,\delta)\in(1,\infty)\times(1,\infty) \times (0,1)$, with probability at least $1-\delta$.
\end{thm}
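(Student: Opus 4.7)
My plan is to combine Theorem~\ref{theratconlsrpro} (applied at level $\delta/2$) with a companion Bernstein-type concentration for the \emph{fixed} benchmark $\tilde g$, and then to stitch them together via a union bound and a Young's-inequality split tuned to produce exactly the constants $6\lambda-5$ and $6$. Because $\cG\subset[-B,B]$ forces $\hat\phi_B = T_B\hat g = \hat g$, the identity \eqref{equaltdesdev} rewrites the left-hand side of \eqref{equbuoproregsim2} as $\frac{1}{n}\sum_{k=1}^n\Esp{h_{\hat\phi,k}(Z_k')\mid Z_{1:n}}$; Theorem~\ref{theratconlsrpro} with $\delta/2$ in place of $\delta$ then yields, with probability $\geq 1-\delta/2$,
\[
\frac{1}{n}\sum_{k=1}^n\|\hat g-\phi_k\|_k^2 \;\leq\; \frac{\lambda}{n}\sum_{k=1}^n H_k \;-\; I \;+\; R(\delta/2),
\]
where $H_k := h_{\tilde g,k}(Z_k)$, $\mu_k := \Esp{H_k} = \|\tilde g-\phi_k\|_k^2$ (by \eqref{equconpyt}), $I := \frac{1}{n}\sum_k\mu_k = \inf_{g\in\cG}\frac{1}{n}\sum_k\|g-\phi_k\|_k^2$, and $R(\delta/2)$ is the remainder on the second line of \eqref{equbuoproreg}.

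Next I would control $\frac{1}{n}\sum_k H_k$ from above by $I$ up to a Bernstein correction. The factorisation $h_{\tilde g,k}(x,y) = (\tilde g(x)-\phi_k(x))(\tilde g(x)+\phi_k(x)-2T_By)$, combined with the $B$-bound on $\tilde g$, $\phi_k$ and $T_BY_k$, gives $|H_k|\leq 8B^2$ and the second-moment bound $\Esp{H_k^2}\leq 16B^2\mu_k$. This variance-mean coupling is exactly the ingredient driving the ratio inequality \eqref{lardevinesym} in \cite[Section 3.2]{bargob19}; running the \emph{same} argument on the singleton family $\{T_B\tilde g\}$---so the covering number collapses to $1$ and the prefactor $a(c,\lambda,\cdot)$ reduces to $2(c+1)(2c+3)$---but now in the reverse direction (bounding $\sum_k(\lambda H_k-\mu_k)$ rather than $\sum_k(\mu_k-\lambda H_k)$) produces a Bernstein-type tail from which one extracts, with probability $\geq 1-\delta/2$,
\[
\frac{1}{n}\sum_{k=1}^n H_k \;\leq\; I \;+\; \sqrt{\tfrac{32B^2I\log(2/\delta)}{n}} \;+\; \tfrac{16B^2\log(2/\delta)}{3n}.
\]

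Finally, the Young split $\sqrt{xy}\leq x/(2t)+ty/2$ with the specific choice $t = 2(5\lambda-4)/\lambda$ is precisely what makes $\lambda(1+t/2)$ equal to $6\lambda-4$; multiplying the previous display by $\lambda$ and substituting back into the first display (which subtracts a $-I$) gives the multiplier $6\lambda-5$ in front of $I$. The residual additive tail $C(\lambda)B^2\log(2/\delta)/n$ is absorbed into $5R(\delta/2)$ using the explicit form $1/b(c,\lambda) = 2^5B^2\times(\text{positive expression in }c,\lambda)$ from \eqref{equspeb}, which guarantees that $R(\delta/2)\geq(b(c,\lambda)n)^{-1}\log(2/\delta)$ already majorises a constant multiple of $B^2\log(2/\delta)/n$ with ample slack in every regime $(c,\lambda)\in(1,\infty)^2$. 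A union bound over the two events of probability $\geq 1-\delta/2$ then delivers \eqref{equbuoproregsim2}. The main technical obstacle is the middle step, namely establishing the symmetric form of \eqref{lardevinesym} with the covering number collapsed to $1$ and the same $\epsilon_n, b$: this is a bookkeeping reprise of the derivation in \cite{bargob19} in which the direction of the variance-mean-coupled Bernstein step is reversed; the subsequent Young tuning to hit the precise constants $6\lambda-5$ and $6$ in \eqref{equbuoproregsim2} is delicate but routine.
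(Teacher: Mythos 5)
Your architecture is close in spirit to the paper's: the paper also isolates the fluctuation of the fixed benchmark $\tilde{g}$ (via the decomposition \eqref{inidesprolardevcon}) and controls it with the singleton-class version of the ratio inequality from \cite{bargob19}, which is exactly the variance--mean-coupled Bernstein step you describe (covering number $1$, prefactor $2(c+1)(2c+3)$). However, there is a concrete error in your first display. Theorem \ref{theratconlsrpro} bounds $\sum_k\Esp{h_{\hat\phi,k}(Z_k')-h_{\tilde g,k}(Z_k')\mid Z_{1:n}}$, and the term $-\sum_k\Esp{h_{\tilde g,k}(Z_k')}=-nI$ appears on \emph{both} sides of \eqref{equbuoproreg} (it is the $-\Esp{h_{\tilde g,k}(Z_k)}$ inside the first sum on the right), so it cancels: the correct consequence is $\frac{1}{n}\sum_k\|\hat g-\phi_k\|_k^2\leq\frac{\lambda}{n}\sum_k H_k+R(\delta/2)$, with no $-I$. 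Your derivation of the constant $6\lambda-5$ leans on that spurious $-I$: with the correct inequality, your choice $t=2(5\lambda-4)/\lambda$ yields the coefficient $\lambda(1+t/2)=6\lambda-4$, not $6\lambda-5$.

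The route is repairable --- take $t=10(\lambda-1)/\lambda$ so that $\lambda(1+t/2)=6\lambda-5$ --- but then the Young residual becomes $\frac{\lambda^2}{20(\lambda-1)}\cdot\frac{32B^2\log(2/\delta)}{n}$, which is \emph{not} ``a constant multiple of $B^2\log(2/\delta)/n$'': it blows up as $\lambda\to1^+$. The absorption into $5R(\delta/2)$ still goes through, but only because $1/b(c,\lambda)=2^5B^2(q_1p+q_2p^2+q_3p^3)$ contains $q_3(\lambda)=(2\lambda-1)^2\lambda/(\lambda-1)$ with the same $(\lambda-1)^{-1}$ singularity; this comparison is exactly the step you wave away as ``ample slack in every regime,'' and it needs to be checked. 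The paper sidesteps Young's inequality entirely: it uses the event inclusion $\{\frac{1}{n}\sum_k(\lambda h_{\tilde g,k}(Z_k)-\Esp{h_{\tilde g,k}(Z_k)})>3\epsilon\}\subset\{\cdots>\epsilon+2(\lambda-1)\epsilon_{\tilde g}\}$ (valid once $\epsilon>(\lambda-1)\epsilon_{\tilde g}$) to convert the deviation of the singleton into the same relative form as \eqref{lardevinesym}, and the factor $3$ there together with the final $2\epsilon(\delta)$ in \eqref{equboulasthesim} produces $1+6(\lambda-1)=6\lambda-5$ directly, with the $(\lambda-1)^{-1}$ bookkeeping already built into $b(c,\lambda)$. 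You should either adopt that inclusion trick or supply the explicit comparison of $\frac{\lambda^2}{20(\lambda-1)}\cdot 32B^2$ against $5/b(c,\lambda)$ uniformly in $(c,\lambda)\in(1,\infty)^2$.
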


\begin{proof} First, notice that in this case the truncation operator is indistinguishable from the identity, in particular $\hat{\phi}=\hat{g}$. Let us also introduce
\begin{align}
\label{equintepsg}
\epsilon_{\tilde{g}}\eqdef \frac{1}{n}\sum_{k=1}^{n}\Esp{h_{\tilde{g}}(Z_{k})}=\frac{1}{n}\sum_{k=1}^{n}\Esp{|\tilde{g}(X_{k})-\phi_{k}(X_{k})|^{2}}=\inf_{g\in \cG}\frac{1}{n}\sum_{k=1}^{n}||{g}-\phi_{k}||^{2}_{k}
\end{align}
where the last two equalities follow from   \eqref{equconpyt} and the definition of $\tilde{g}$ in \eqref{equdefgapp}.

By decomposing as in \eqref{inidesprolardev} and using again \eqref{equconpyt} and \eqref{equintepsg}, we arrive at the inequality 
\begin{align}
\frac{1}{n}\sum_{k=1}^{n}||\hat{g}-\phi_{k}||_{k}^{2}\leq &\inf_{g\in\cG}\frac{1}{n}\sum_{k=1}^{n}||{g}-\phi_{k}||^{2}_{k}\notag\\
+& 
\sup_{g\in \cG}\left\{\frac{1}{n}\sum_{k=1}^{n}(\Esp{h_{{g},k}(Z_{k})}-\lambda h_{g,k}(Z_{k}))\right\}+ \frac{1}{n}\sum_{k=1}^{n} (\lambda h_{\tilde{g},k}(Z_{k})-\Esp{h_{\tilde{g},k}(Z_{k})}).
\label{inidesprolardevcon}
\end{align}
We  will seek this time for $\epsilon(\delta)$  such that
\beql{equargspluni}
&\PPro{\sup_{g\in \cG}\left\{\frac{1}{n}\sum_{k=1}^{n}(\Esp{h_{{g},k}(Z_{k})}-\lambda h_{g,k}(Z_{k}))\right\}
\geq \epsilon(\delta) 
}\lor \\&\qqq \PPro{\frac{1}{n}\sum_{k=1}^{n}\lambda h_{\tilde{g}}(Z_{k})-\Esp{h_{\tilde{g}}(Z)}\geq\epsilon(\delta)}
\leq \frac{\delta}{2}
\eeql 
which implies that 
\begin{align}
 \frac{1}{n}\sum_{k=1}^{n}||\hat{g}-\phi_{k}||_{k}^{2}\leq\inf_{g\in \cG}\frac{1}{n}\sum_{k=1}^{n}||g-
\phi_{k}||_{k}^{2}
+2\epsilon(\delta)
\label{equboulasthesim}
\end{align}
with probability at least $1-\delta$.

An  $\epsilon_{1}(\delta)$ appropriate for the first term in the maximization \eqref{equargspluni} can be found exactly as in the proof of Theorem \ref{theratconlsrpro}: it suffices to exchange $\delta$ by $\delta/2$ in that argument. To treat the second term, notice that if
\begin{align}
\label{equconepsiid}
\epsilon>(\lambda-1)\epsilon_{\tilde{g}}
\end{align}
is given, then
\begin{align}
\left\{\frac{1}{n}\sum_{k=1}^{n}(\lambda h_{\tilde{g}}(Z_{k})-\Esp{h_{\tilde{g}}(Z_{k})})>3\epsilon\right\}\notag\\
\subset\left\{\frac{1}{n}\sum_{k=1}^{n}(\lambda h_{\tilde{g}}(Z_{k})-\Esp{h_{\tilde{g}}(Z_{k})})>\epsilon+2(\lambda-1)\epsilon_{\tilde{g}}\right\}\notag\\
=\left\{\frac{1}{n}\sum_{k=1}^{n} (h_{\tilde{g}}(Z_{k})-\Esp{h_{\tilde{g}}(Z_{k})})>\frac{\lambda-1}{\lambda}\left(\frac{\epsilon}{\lambda-1}+\frac{1}{n}\sum_{k=1}^{n}\Esp{h_{\tilde{g}}(Z_{k})}\right)\right\}.
\end{align}
The probability of this last event can be computed following the argument in \cite{bargob19} leading to \eqref{lardevinesym},  this time giving rise to the  inequality 
\begin{align}
\PPro{\left\{\frac{1}{n}\sum_{k=1}^{n}(\lambda \Esp{h_{\tilde{g},k}(Z_{k})}-h_{\tilde{g},k}(Z_{k}))\right\}\geq 3\epsilon} & \notag\\ 
\leq\un_{\{\epsilon< \epsilon_{n}(c,\lambda)\lor (\lambda-1)\epsilon_{\tilde{g}} \}}+& 
a(c)\exp(-b(c,\lambda) n \epsilon )\un_{\{\epsilon\geq \epsilon_{n}(c,\lambda)\lor (\lambda-1)\epsilon_{\tilde{g}}\}}
\label{lardevinesymonefun}
\end{align}
where $a(c)=2(c+1)(2c+3)$ (the covering numbers from the argument in \cite{bargob19} are equal to one). This and the argument in the proof of Theorem \ref{theratconlsrpro} permit to conclude that for any $\epsilon_{2}(\delta)$  satisfying
\begin{align}
\epsilon_{2}(\delta)\geq(\lambda-1)\epsilon_{\tilde{g}}\lor  \epsilon_{n}(c,\delta)\lor\left(\frac{1}{nb(c,\lambda)}(\log a(c)+\log({2}/{\delta}))\right)
\end{align}
we have the estimate
\begin{align}
\PPro{\left\{\frac{1}{n}\sum_{k=1}^{n}(\Esp{h_{\tilde{g},k}(Z_{k})}-\lambda h_{\tilde{g},k}(Z_{k}))\right\}\leq 3\epsilon_{2}(\delta)}\geq 1-\frac{\delta}{2}.
\end{align}
The upper bound at the right hand side of \eqref{equbuoproregsim2} follows from writing down \eqref{equboulasthesim} and using \eqref{equintepsg} when
\begin{align}
\epsilon(\delta)\eqdef 3\left((\lambda-1)\epsilon_{\tilde{g}}+  (\epsilon_{n}(c,\delta)\lor(\frac{1}{nb(c,\lambda)}(\log a(c,\lambda,\epsilon_{n}(c,\lambda))+\log(2/\delta))))\right)
\end{align}
(which is  lower bounded by $\epsilon_{1}(\delta)\lor 3\epsilon_{2}(\delta)$).\fe
\end{proof}

\medskip

Using the inequality 
\begin{align}
(a+b)^{2}\leq\alo{\eta}a^{2}+\alo{1/\eta}
b^{2},
\label{inesumsqu}
\end{align}
valid for every $(a,b,\eta)\in \R\times\R\times (0,\infty)$, where
\begin{align}
\label{equfunpluone}
\alo{\eta}=1+\eta
\end{align}
for every $\eta>0$, it is not difficult to lift the previous result to an estimate of the distance between $\hat{g}$ as before and the conditional expectations of $W$ (without truncation) given $X$, namely:
\bcor
\label{corboucasl2}
Assume that $W_{k}\in L^{2}_{\PP}$ ($k=1,\dots,n$) and consider versions $\Phi_{1:n}$ of the conditional expectation of $W$ given $X$
\begin{align}
\Phi_{k}(X_{k})=\Esp{W_{k}|X_{k}},&\,\, \PP-a.s.,
\end{align}
for $k=1,\dots,n$. Then, with the notation and the hypothesis from Theorem \ref{corbouproiidreg} and the notation \eqref{equfunpluone}, the inequality
\begin{align}
\frac{1}{n}\sum_{k=1}^{n}||\hat{g}-\Phi_{k}||_{k}^{2}\leq& \alo{\eta}\left(\alo{\eta'}(6\,\lambda-5) \inf_{g\in \cG}\frac{1}{n}\sum_{k=1}^{n}||g-
\Phi_{k}||_{k}^{2}
 \right.\notag\\
+ &\left.6\left(\epsilon_{n}(c,\lambda)\lor (\frac{1}{nb(c,\lambda)}(\log a(c,\lambda,\epsilon_{n}(c,\lambda))+\log(2/\delta)))\right)\right)\notag\\
   +&\left(\alo{1/\eta}+\alo{\eta}\alo{1/\eta'}(6\,\lambda-5)\right) \frac{1}{n}\sum_{k=1}^{n}\Esp{(|W_{k}|-B)^{2}\un_{\{|W_{k}|>B\}}}
\label{equbuoproregfur}
\end{align}
holds for every $(c,\lambda,\eta,\eta'\delta)\in(1,\infty)\times(1,\infty) \times (0,\infty)\times(0,\infty)\times (0,1)$, with probability at least $1-\delta$. 
\ecor
\brem
Notice that the bound \eqref{equbuoproregsim2} follows from \eqref{equbuoproregfur} by considering $W\equiv T_{B}W$ and letting $\eta,\eta'\to 0$.\fe
\erems
\begin{proof} (of Corollary \ref{corboucasl2})
For every $(k,g,\eta)\in\{1,\dots,n\}\times \cG\times (0,\infty)$, the estimates
\begin{align}
|| {g}-\Phi_{k}||_{k}^{2}\leq&\alo{\eta}||{g}-\phi_{k}||_{k}^{2}+\alo{1/\eta}||\phi_{k}-\Phi_{k}||_{k}^{2}\notag\\
=&\alo{\eta}||{g}-\phi_{k}||_{k}^{2}+\alo{1/\eta}\Esp{(\Esp{(T_{B}W_{k}-W_{k})|X_{k}})^{2}}\notag\\
\leq&\alo{\eta}||{g}-\phi_{k}||_{k}^{2}+\alo{1/\eta}\Esp{(|W_{k}|-|B|)^{2}\un_{\{|W_{k}|>B\}}}
\label{inebouunbcas}
\end{align}
hold (the last is an application of Jensen's inequality). Taking $g=\hat{g}$ and averaging over $k$ we get an inequality whose right--hand side is   bounded in probability  via \eqref{equbuoproregsim2}. The estimate thus obtained  is then upper bounded again via a further application of \eqref{inebouunbcas} with $\eta'$ in place of $\eta$ and $\phi_{k}$ exchanged with $\Phi_{k}$.\fe
\end{proof}

\subsection*{Acknowledgements}
\label{seccon}

The research leading to this paper was supported by several sources during its different stages, all of which deserve the author's most sincere gratitude. 

At its initial stage in 2020 the author was supported by a grant from the the Chair {\it Stress Test,
RISK Management and Financial Steering}, led by the \'{E}cole Polytechnique (l'X) and its Foundation and sponsored by BNP Paribas, for its participation as postdoctoral researcher at the aforementioned school, and later by a public allocation from the French employment agency (P\^{o}le Emploi) that was very important to keep the pace during a job transition elongated by the COVID-19 pandemic. The research continued in 2021  during the author's affiliation with the \'{E}cole Polytechnique F\'{e}d\'{e}rale de Lausanne (EPFL) for participation in a project that profits also from the results presented above: this memorable stay was possible thanks to a grant provided by the EPFL's Chair of Statistical Data Science, under the supervision of Prof. Sofia Olhede. The paper was finalized during the author's first month as an associate professor at the University of the Andes (Uniandes).

\bibliographystyle{alpha}
\bibliography{estregbib}

\end{document}